\providecommand{\U}[1]{\protect \rule{.1in}{.1in}}
\newtheorem{theorem}{Theorem}[section]
\newtheorem{definition}[theorem]{Definition}
\newtheorem{assumption}[theorem]{Assumption}
\newtheorem{example}[theorem]{Example}
\newtheorem{lemma}[theorem]{Lemma}
\newtheorem{remark}[theorem]{Remark}
\newenvironment{proof}[1][Proof]{\noindent \textbf{#1.} }{\  \rule{0.5em}{0.5em}}
\numberwithin{equation}{section}
\begin{document}

\title{Error estimates for the robust $\alpha$-stable central limit theorem under
sublinear expectation by discrete approximation method}

%\title{A discrete approximation scheme for fully nonlinear partial integro-differential
%equations with the convergence rate of robust $\alpha$-stable central limit theorem}

\author{Lianzi Jiang\thanks{College of Mathematics and Systems Science, Shandong
University of Science and Technology, Qingdao, Shandong 266590, China.
jianglianzi95@163.com. Research supported by National Natural Science
Foundation of Shandong Province (No. ZR2023QA090)} }
\date{}
\maketitle

In this work, we develop a numerical method to study the error estimates of the $\alpha$-stable
central limit theorem under sublinear expectation with $\alpha \in(0,2)$, whose
limit distribution can be characterized by a fully nonlinear
integro-differential equation (PIDE). Based on the sequence of independent
random variables, we propose a discrete approximation scheme for the fully
nonlinear PIDE. With the help of the nonlinear stochastic analysis techniques
and numerical analysis tools, we establish the error bounds for the discrete
approximation scheme, which in turn provides a general error bound for the
robust $\alpha$-stable central limit theorem, including the integrable case
$\alpha \in(1,2)$ as well as the non-integrable case $\alpha \in(0,1]$. Finally,
we provide some concrete examples to illustrate our main results and derive
the precise convergence rates.

\textbf{Keywords}. Robust stable central limit theorem, Discrete approximation scheme,
Error estimate, Convergence rate, Sublinear expectation\newline

%\textbf{MSC-classification}. 60F05, 60G65, 45K05, 60-08
\section{Introduction}

The theory of robust probability and expectation has been developed by Peng
\cite{P2004,P2007,P20081,P2010}, who introduced the notion of sublinear
expectation space, called $G$-expectation space. This theory allows for the
evaluation of random outcomes over a family of possibly mutually singular
probability measures instead of a single probability measure. One of the
fundamental results in the theory is Peng's central limit theorem with laws of
large numbers established in \cite{P20082,P2010}. He showed that, under
certain moment conditions, the i.i.d. sequence $\{(X_{i},Y_{i})\}_{i=1}%
^{\infty}$ on a sublinear expectation space $(\Omega,\mathcal{H}%
,\mathbb{\hat{E}})$ converges in law to a $G$-distributed random variable
$(\xi,\eta)$ which can be characterized via a fully nonlinear parabolic PDE,
called $G$-equation
\[
\left \{
\begin{array}
[c]{l}%
\partial_{t}u(t,x,y)-G(D_{y}u,D_{x}^{2}u)=0,\\
u(0,x,y)=\phi(x,y),
\end{array}
\right.
\]
for any test function $\phi$. The discussion around convergence rates in
Peng's central limit theorem with laws of large numbers has been made by Song \cite{Song2020} and Fang et
al. \cite{FPSS2019} using Stein's method, and by Krylov \cite{Krylov2020} using
discrete stochastic control approximation under model uncertainty and further in
Huang and Liang \cite{HL2020} using monotone approximation scheme for
$G$-equation. For a recent account and development of $G$-expectation theory
and its applications, we refer the reader to
\cite{BMB2019,HTW2022,STZh2011,S2021} and the references therein.

As a generalization of $G$-Brownian motion, Hu and Peng \cite{HP2021}
introduced the concept of $G$-L\'{e}vy processes and established the
connection between $G$-L\'{e}vy processes and fully nonlinear
integro-differential equations (PIDEs) with finite jumps. Furthermore, the
case of infinite activity jumps has been studied in
\cite{DKN2020,Kuhn2019,NN2017,NR2021}. An important class of nonlinear
L\'{e}vy processes is the $\alpha$-stable process $(\zeta_{t})_{t\geq0}$ for
$\alpha \in(1,2)$, which corresponds to a fully nonlinear PIDE driven by a
family of $\alpha$-stable L\'{e}vy measures. On the basis of it, the related
$\alpha$-stable central limit theorem under sublinear expectation was
established by Bayraktar and Munk \cite{BM2016}. However, they require the
distribution assumption with respect to the solution of the fully nonlinear PIDE.

Based on the weak convergence approach, in collaboration with Hu, Liang, and Peng, the
author in \cite{HJLP2022} weakened the distribution assumption of the fully
nonlinear PIDE in \cite{BM2016} to that of test function, and established a
universal robust limit theorem, which encompasses Peng's robust central limit
theorem \cite{P20082,P2010} and Bayraktar-Munk's robust $\alpha$-stable limit
theorem \cite{BM2016} with $\alpha \in(1,2)$ as special cases. More recently,
together with Liang, the author in \cite{JL2023} relaxed the integrability condition in
\cite{HJLP2022} and established a new non-integrable $\alpha$-stable central
limit theorem for $\alpha \in(0,1]$. Specifically, together
with the case $\alpha \in(1,2)$, we show that the i.i.d. sequence of random
variables $\{Z_{i}\}_{i=1}^{\infty}$ on a sublinear expectation space
$(\Omega,\mathcal{H},\mathbb{\hat{E})}$ converges in law to a nonlinear
$\alpha$-stable distributed random variable $\tilde{\zeta}_{1}$ under moment
and consistency conditions, that is, for any $\phi \in C_{b,Lip}(\mathbb{R}%
^{d})$,
\begin{equation}
\lim_{n\rightarrow \infty}\mathbb{\hat{E}}\bigg[\phi \bigg(\frac{1}%
{\sqrt[\alpha]{n}}\sum_{i=1}^{n}Z_{i}\bigg)\bigg]=\mathbb{\tilde{E}}%
[\phi(\tilde{\zeta}_{1})].\label{0.0}%
\end{equation}
The limiting process\ $\tilde{\zeta}_{\cdot}$\ can be characterized by a fully
nonlinear PIDE
\begin{equation}
\left \{
\begin{array}
[c]{l}%
\displaystyle \partial_{t}u(t,x)-\sup \limits_{F_{\mu}\in \mathcal{L}}%
\int_{\mathbb{R}^{M}}\delta_{\lambda}^{\alpha}u(t,x)F_{\mu}(d\lambda)=0,\\
\displaystyle u(0,x)=\phi(x),\text{\  \  \ }\forall(t,x)\in \lbrack
0,1]\times \mathbb{R}^{d},
\end{array}
\right.  \label{0.1}%
\end{equation}
where $F_{\mu}$ is an $\alpha$-stable L\'{e}vy measure, $\mathcal{L}$ is a jump
uncertainty set, and the integral term is singular at the origin and given by
\[
\delta_{\lambda}^{\alpha}u(t,x):=\left \{
\begin{array}
[c]{ll}%
u(t,x+\lambda)-u(t,x)-\langle D_{x}u(t,x),\lambda \rangle, & \alpha \in(1,2),\\
u(t,x+\lambda)-u(t,x)-\langle D_{x}u(t,x),\lambda \mathbbm{1}_{\{|\lambda
|\leq1\}}\rangle, & \alpha=1,\\
u(t,x+\lambda)-u(t,x), & \alpha \in(0,1).
\end{array}
\right.
\]
When $\mathcal{L}$ is a singleton, $\tilde{\zeta}_{1}$ becomes an $\alpha
$-stable distributed random variable. The corresponding convergence rate of
the $\alpha$-stable central limit theorem has been studied by using the
characteristic function approach (see, e.g., \cite{DN2002,H19811,JP1998,KK2001}), and
by using Stein's method (see, e.g., \cite{CNXYZ2022,NP2012,X2019}). However,
the related tools in the sublinear expectation framework are still in infancy.
The goal of this work is to investigate the precise error bounds for the above
limit theorem by means of an analytical method.

The basic framework for convergence of numerical schemes to viscosity
solutions of first-order equations was initially established by Barles and
Souganidis \cite{BS1991}. By using a technique pioneered by Krylov based on
"shaking the coefficients" and mollification to construct smooth
subsolutions/supersolutions, the convergence rate for second-order equations
was first proved by Krylov in \cite{Krylov1997,Krylov1999,Krylov2000}. This
technique was further developed by Barles and Jakobsen on monotone
approximation schemes for local HJB equations (e.g.,
\cite{BJ2002,BJ2005,BJ2007,DJ2012}) and for nonlocal problems (e.g.,
\cite{BCJ2019,BJK2010,JKC2008}).

In this work, we shall give an error estimate result for the robust $\alpha$-stable
central limit theorem with $\alpha \in(0,2)$ by using discrete approximation schemes,
based on the observation that limiting distribution corresponds to the viscosity
solution of the fully nonlinear PIDE (\ref{0.1}). We first introduce a discrete approximation scheme using
the sequence $\{Z_{i}\}_{i=1}^{\infty}$. Then the error rates of the robust
$\alpha$-stable central limit theorem is transformed into the error bounds of
the discrete approximation scheme. It is well known that the error bounds rely
on the regularity of the discrete approximation scheme.
%Since the regularity results of (\ref{0.1}) is known in
%\cite{JL2023}, the regularity result of the scheme plays a key role in proving
%the error bounds.
However, the random variable $Z_{1}$ has infinite variance. When
$\alpha \in(0,1]$, $Z_{1}$ is not even integrable, and the regularity estimates
method developed in \cite{Krylov2020} fails. To overcome it, we develop a new
$\delta$-estimate technique to obtain a general regularity result. Then by
using mollification method and nonlinear stochastic analysis approach, along
with the comparison principle and consistency estimates, we obtain the general
error bounds for our scheme, which in turn provides a general convergence
error for the robust $\alpha$-stable central limit theorem. Finally, we
provide some concrete examples to illustrate our main results and derive their
precise convergence rates. We also mention the work \cite{HJL2021}, where a
special sequence of random variables converging in law to an $\alpha$-stable
distributed with $\alpha \in(1,2)$ has been considered. To the best of our
knowledge, the present paper is the first dealing with convergence rate for
the robust $\alpha$-stable central limit theorem with $\alpha \in(0,2)$.

We organize this paper as follows. Section 2 gives some basic results on
sublinear expectation framework and assumptions on the robust $\alpha$-stable
central limit theorem. We shall propose a discrete approximation scheme
related to the limit theorem in Section 3 and study its properties, including
regularity, consistency, and comparison principle. Then we establish the
general error bounds for both the scheme and the limit theorem in Section 4.
In Section 5, some examples are given to derive the convergence rate in detail.

\section{Main assumptions and preliminaries}

\subsection{Sublinear expectation}

We first offer an extended sublinear expectation framework, which relaxes the
integrability requirement of random variables in Peng
\cite{P2007,P20081,P2010}. Let $\mathcal{H}$ be a linear space of real valued
functions on a given set $\Omega$ such that $\varphi(X_{1},\ldots,X_{n}%
)\in \mathcal{H}$ if $X_{1},\ldots,X_{n}\in \mathcal{H}$ for each $\varphi
\in{C_{b,Lip}}(\mathbb{R}^{n})$, the space of bounded and Lipschitz continuous
functions on $\mathbb{R}^{n}$. A sublinear expectation is a functional
$\mathbb{\hat{E}}:\mathcal{H}\rightarrow \mathbb{R}$ satisfying

\begin{description}
\item[(i)] (Monotonicity) $\mathbb{\hat{E}}[X]\geq \mathbb{\hat{E}}[Y]$, if
$X\geq Y$;

\item[(ii)] (Constant preservation) $\mathbb{\hat{E}}[c]=c$, for
$c\in \mathbb{R}$;

\item[(iii)] (Sub-additivity) $\mathbb{\hat{E}}[X+Y]\leq \mathbb{\hat{E}%
}[X]+\mathbb{\hat{E}}[Y]$;

\item[(iv)] (Positive homogeneity) $\mathbb{\hat{E}}[\lambda X]=\lambda
\mathbb{\hat{E}}[X]$, for $\lambda>0$.
\end{description}

The triplet $(\Omega,\mathcal{H},\mathbb{\hat{E})}$ is called a sublinear
expectation space. For an $n$-dimensional random variable $X$ defined on
$(\Omega,\mathcal{H},\mathbb{\hat{E})}$, we mean that $\varphi(X)\in
\mathcal{H}$ for all $\varphi \in C_{b,Lip}(\mathbb{R}^{n})$, and the random
variable $X$ itself is not required to be in $\mathcal{H}$.

%\begin{definition}
%Let $X$ be a given $n$-dimensional random variable defined on a sublinear
%expectation space $(\Omega,\mathcal{H},\mathbb{\hat{E})}$. Define a functional
%on $C_{b,Lip}(\mathbb{R}^{n})$ by
%\[
%\mathbb{F}_{X}[\varphi]:=\mathbb{\hat{E}}[\varphi(X)],\text{ for }\varphi \in
%C_{b,Lip}(\mathbb{R}^{n}).
%\]
%Then, $(\mathbb{R}^{n},C_{b,Lip}(\mathbb{R}^{n}),\mathbb{F}_{X})$ forms a
%sublinear expectation space. $\mathbb{F}_{X}$ is called the distribution of
%$X$ under $\mathbb{\hat{E}}$.
%\end{definition}

\begin{definition}
\emph{(i)} Let $X_{1}$\ and $X_{2}$\ be two $n$-dimensional random variables
defined respectively on sublinear expectation spaces $(\Omega_{1}%
,\mathcal{H}_{1},\mathbb{\hat{E}}_{1})$ and $(\Omega_{2},\mathcal{H}%
_{2},\mathbb{\hat{E}}_{2})$. They are called identically distributed, denoted
by $X\overset{d}{=}Y$, if
\[
\mathbb{\hat{E}}_{1}[\varphi(X_{1})]=\mathbb{\hat{E}}_{2}[\varphi(X_{2})],
\]
for all $\varphi \in C_{b,Lip}(\mathbb{R}^{n})$.

\emph{(ii)} In a sublinear expectation space $(\Omega,\mathcal{H}%
,\mathbb{\hat{E})}$, a $m$-dimensional random variable $Y$, is said to be
independent from another $n$-dimensional random variable $X$ under
$\mathbb{\hat{E}}[\cdot]$, if for each $\varphi \in C_{b,Lip}(\mathbb{R}%
^{n+m})$
\[
\mathbb{\hat{E}}\left[  \varphi(X,Y)\right]  =\mathbb{\hat{E}}\left[
\mathbb{\hat{E}}\left[  \varphi(x,Y)\right]  _{x=X}\right]  ,
\]
which is denoted by $Y\perp \! \! \! \perp X$. If $Y\overset{d}{=}X$ and
$Y\perp \! \! \! \perp X$, we call $Y$ an independent copy of $X$.

\emph{(iii)} A sequence of $n$-dimensional random variables $\{X_{i}%
\}_{i=1}^{\infty}$ on a sublinear expectation space $(\Omega,\mathcal{H}%
,\mathbb{\hat{E}})$ is said to converge in law to $X$\ under $\mathbb{\hat{E}%
}$ if for each $\varphi \in C_{b,Lip}(\mathbb{R}^{n})$%
\[
\lim_{i\rightarrow \infty}\mathbb{\hat{E}}\left[  \varphi(X_{i})\right]
=\mathbb{\hat{E}}\left[  \varphi(X)\right]  .
\]

\end{definition}

\begin{definition}
Let $\alpha \in(0,2)$. An $n$-dimensional random variable $X$ is said to be
strictly $\alpha$-stable under a sublinear expectation space $(\Omega
,\mathcal{H},\mathbb{\hat{E})}$ if
\[
aX+bY\overset{d}{=}(a^{\alpha}+b^{\alpha})^{1/\alpha}X,\text{\ for }a,b\geq0,
\]
where $Y$ is an independent copy of $X$.
\end{definition}

\subsection{Robust $\alpha$-stable central limit theorem}

In this subsection, we review the robust $\alpha$-stable central limit theorem
developed in \cite{HJLP2022,JL2023} and introduce some properties of the
related fully nonlinear PIDE. We start by
collecting some useful notation which is needed frequently throughout this work.

Let $\alpha \in(0,2)$, $(\underline{\Lambda},\overline{\Lambda})$ for some
$\underline{\Lambda},\overline{\Lambda}>0$, and $F_{\mu}$ be the $\alpha
$-stable L\'{e}vy measure on $(\mathbb{R}^{M},\mathcal{B}(\mathbb{R}^{M}))$,
\begin{equation}
F_{\mu}(B)=\int_{S}\mu(dx)\int_{0}^{\infty}\mathbbm{1}_{B}(rx)\frac
{dr}{r^{1+\alpha}},\text{ \ for }B\in \mathcal{B}(\mathbb{R}^{M}), \label{F_mu}%
\end{equation}
where $\mu$ is a finite spectral measure on $S=\{x\in \mathbb{R}^{M}:|x|=1\}$. Introduce
a jump uncertainty set
\begin{equation}
\mathcal{L}=\left \{  F_{\mu}\  \text{measure on }\mathbb{R}^{M}:\mu
(S)\in(\underline{\Lambda},\overline{\Lambda})\right \}  . \label{L_0}%
\end{equation}

Let $\{Z_{i}\}_{i=1}^{\infty}$ be an i.i.d. sequence of $\mathbb{R}^{d}%
$-valued random variables defined on a sublinear expectation space
$(\Omega,\mathcal{H},\mathbb{\hat{E}})$, in the sense that $Z_{i+1}$
$\overset{d}{=}Z_{i}$ and $Z_{i+1}$ is independent from $(Z_{1},\ldots,Z_{i})$
for each $i\in \mathbb{N}$ satisfying the following assumptions:

\begin{assumption}
\label{assump1}

\begin{description}
\item[(i)] For $S_{n}:=\sum \limits_{i=1}^{n}Z_{i}$, $M_{\delta}:=\sup
\limits_{n}\mathbb{\hat{E}}[|n^{-\frac{1}{\alpha}}S_{n}|^{\delta}]<\infty$,
for some $0<\delta<\alpha$.

\item[(ii)] For each $\varphi \in C_{b}^{3}(\mathbb{R}^{d})$, the set of
functions with uniformly bounded derivatives up to the order $3$, satisfies
\[
\frac{1}{s}\bigg \vert \mathbb{\hat{E}}\big[\varphi(x+s^{\frac{1}{\alpha}%
}Z_{1})-\varphi(x)\big]-s\sup \limits_{F_{\mu}\in \mathcal{L}}\int
_{\mathbb{R}^{M}}\delta_{\lambda}^{\alpha}\varphi(x)F_{\mu}(d\lambda
)\bigg \vert \leq l_{\varphi}(s)\rightarrow0
\]
uniformly on $x\in \mathbb{R}^{d}$ as $s\rightarrow0$, where $l_{\varphi
}:[0,1]\rightarrow \mathbb{R}_{+}$ and
\[
\delta_{\lambda}^{\alpha}\varphi(x):=\left \{
\begin{array}
[c]{ll}%
\varphi(x+\lambda)-\varphi(x)-\langle D\varphi(x),\lambda \rangle, & \alpha
\in(1,2),\\
\varphi(x+\lambda)-\varphi(x)-\langle D\varphi(x),\lambda
\mathbbm{1}_{\{|\lambda|\leq1\}}\rangle, & \alpha=1,\\
\varphi(x+\lambda)-\varphi(x), & \alpha \in(0,1).
\end{array}
\right.
\]

\end{description}
\end{assumption}

\begin{remark}
From the moment condition (i) we can see that $\alpha=1$ is the critical
parameter for $Z_{i}$, $i\in \mathbb{N}$, since $Z_{i}$ is integrable if
$\alpha>1$, non-integrable otherwise. To be consistent with the assumption in \cite{HJLP2022},
in this paper, we always adopt $\delta=1$ in the case of $\alpha \in(1,2)$.
The condition (ii) ensures the consistency
condition when using the monotone approximation method (cf.
\cite{BJ2002,BJ2005,BJ2007}) to derive error estimates of the limit theorem.
For more details, we refer to Section \ref{Sec ex}, where the assumption above
will be illustrated with several examples.
\end{remark}

With the use of the weak convergence method and the L\'{e}vy-Khintchine
representation in \cite{HJLP2022,JL2023}, the corresponding robust $\alpha
$-stable central limit theorem and its PIDE characterization are established,
which gives the existence of the fully nonlinear PIDE (\ref{0.1}). It is also
standard to prove regularity results (see Theorem 4.9 in \cite{HJLP2022} and Theorem 4.7 in \cite{JL2023})
and uniqueness results (see Corollary 55 in \cite{HP2021} or Proposition 5.5 in \cite{NN2017}).

\begin{theorem}
\label{stable limit theorem}Suppose that Assumption \ref{assump1} holds. Then,
there exists a nonlinear $\alpha$-stable process $(\tilde{\zeta}_{t}%
)_{t\in \lbrack0,1]}$, connected with the jump uncertainty set $\mathcal{L}$
such that for any $\phi \in C_{b,Lip}(\mathbb{R}^{d})$,
\[
\lim_{n\rightarrow \infty}\mathbb{\hat{E}}\left[  \phi \left(  \frac{S_{n}%
}{\sqrt[\alpha]{n}}\right)  \right]  =\mathbb{\tilde{E}}[\phi(\tilde{\zeta
}_{1})]=u(1,0),
\]
where $u$ is the unique viscosity solution of the following fully nonlinear
PIDE
\begin{equation}
\left \{
\begin{array}
[c]{l}%
\displaystyle \partial_{t}u(t,x)-\sup \limits_{F_{\mu}\in \mathcal{L}}%
\int_{\mathbb{R}^{M}}\delta_{\lambda}^{\alpha}u(t,x)F_{\mu}(d\lambda)=0,\\
\displaystyle u(0,x)=\phi(x),\text{\  \  \ }\forall(t,x)\in \lbrack
0,1]\times \mathbb{R}^{d},
\end{array}
\right.  \label{PIDE}%
\end{equation}
where
\[
\delta_{\lambda}^{\alpha}u(t,x):=\left \{
\begin{array}
[c]{ll}%
u(t,x+\lambda)-u(t,x)-\langle D_{x}u(t,x),\lambda \rangle, & \alpha \in(1,2),\\
u(t,x+\lambda)-u(t,x)-\langle D_{x}u(t,x),\lambda \mathbbm{1}_{\{|\lambda
|\leq1\}}\rangle, & \alpha=1,\\
u(t,x+\lambda)-u(t,x), & \alpha \in(0,1).
\end{array}
\right.
\]
Furthermore, the following properties hold:

\begin{description}
\item[(i)] For $(t,x)\in \lbrack0,1]\times \mathbb{R}^{d}$,
$u(t,x)=\mathbb{\tilde{E}}[\phi(x+\tilde{\zeta}_{t})]$ is the unique viscosity
solution of (\ref{PIDE}) with a uniform bound $\left \Vert \phi \right \Vert
_{\infty}$, i.e., $\left \Vert u \right \Vert _{\infty}\leq\left \Vert \phi \right \Vert _{\infty}$.

\item[(ii)] For any $t,s\in \lbrack0,1]$ and $x,y\in \mathbb{R}^{d}$,
\begin{equation}
|u(t,x)-u(s,y)|\leq K(|t-s|^{\frac{\delta}{\alpha}}+|x-y|),
\label{u_regularity}%
\end{equation}
with $K:=C_{\phi}^{\delta}(2\left \Vert \phi \right \Vert _{\infty})^{1-\delta
}M_{\delta}\vee C_{\phi}$.

\item[(iii)] Let $u,-v$ be bounded upper semicontinuous functions on
$[0,1]\times \mathbb{R}^{d}$. If $u$ and $v$ are respectively viscosity sub-
and super- solutions of (\ref{PIDE}) and $u(0,\cdot)\leq v(0,\cdot)$, then
$u(t,\cdot)\leq v(t,\cdot)$ in $t\in(0,1].$
\end{description}
\end{theorem}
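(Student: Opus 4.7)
The plan is to verify each claim by invoking results from \cite{HJLP2022,JL2023,HP2021,NN2017} and checking that the ingredients match Assumption \ref{assump1}. First, I would apply the weak convergence construction of \cite{HJLP2022} (for $\alpha\in(1,2)$) and \cite{JL2023} (for $\alpha\in(0,1]$): the consistency hypothesis \ref{assump1}(ii) together with a L\'{e}vy-Khintchine type representation on the sublinear expectation space builds a nonlinear $\alpha$-stable process $(\tilde{\zeta}_{t})_{t\in[0,1]}$ whose marginals provide the limit, and shows that $u(t,x)=\tilde{\mathbb{E}}[\phi(x+\tilde{\zeta}_{t})]$ solves (\ref{PIDE}) in the viscosity sense. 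This simultaneously delivers the convergence formula $\tilde{\mathbb{E}}[\phi(\tilde{\zeta}_{1})]=u(1,0)$ and the stochastic representation in (i).

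Next, I would derive the uniform bound in (i) from sublinearity and monotonicity of $\tilde{\mathbb{E}}$: $|u(t,x)|\leq\tilde{\mathbb{E}}[\|\phi\|_{\infty}]=\|\phi\|_{\infty}$. Uniqueness among bounded viscosity solutions then reduces to the comparison principle (iii), which I would import directly from \cite[Corollary 55]{HP2021} or \cite[Proposition 5.5]{NN2017}: since the nonlocal operator in (\ref{PIDE}) is a supremum over a family of $\alpha$-stable L\'{e}vy generators with spectral mass uniformly bounded in $(\underline{\Lambda},\overline{\Lambda})$, the monotonicity, translation-invariance, and continuity hypotheses required by those references are satisfied, and the standard doubling-of-variables argument applies.

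For the regularity estimate (ii), spatial Lipschitz continuity is immediate from the representation:
\[
|u(t,x)-u(t,y)|\leq\tilde{\mathbb{E}}\big[|\phi(x+\tilde{\zeta}_{t})-\phi(y+\tilde{\zeta}_{t})|\big]\leq C_{\phi}|x-y|.
\]
Time regularity exploits $\alpha$-self-similarity and stationarity of increments to write $\tilde{\zeta}_{t+h}-\tilde{\zeta}_{t}\overset{d}{=}h^{1/\alpha}\tilde{\zeta}_{1}$, whence
\[
|u(t+h,x)-u(t,x)|\leq\tilde{\mathbb{E}}\big[\min\{C_{\phi}h^{1/\alpha}|\tilde{\zeta}_{1}|,\,2\|\phi\|_{\infty}\}\big].
\]
Applying the interpolation $\min\{a,b\}\leq a^{\delta}b^{1-\delta}$ for $0<\delta\leq 1$ together with the moment bound $\tilde{\mathbb{E}}[|\tilde{\zeta}_{1}|^{\delta}]\leq M_{\delta}$ (inherited from Assumption \ref{assump1}(i) by passing to the limit through the convergence in law and a Fatou-type lower semicontinuity in the sublinear setting) produces the $|h|^{\delta/\alpha}$ bound with the claimed constant $K=C_{\phi}^{\delta}(2\|\phi\|_{\infty})^{1-\delta}M_{\delta}\vee C_{\phi}$.

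The main obstacle is precisely this time estimate in the non-integrable regime $\alpha\in(0,1]$, where $L^{1}$-based control on $|\tilde{\zeta}_{1}|$ is unavailable: the $\delta$-truncation above, and the transfer of the $M_{\delta}$ bound from the prelimit sums $n^{-1/\alpha}S_{n}$ to the limit $\tilde{\zeta}_{1}$, is the step that unifies the integrable case (taking $\delta=1$) and the non-integrable case ($\delta<\alpha$), and is exactly the content cited from \cite[Theorem 4.9]{HJLP2022} and \cite[Theorem 4.7]{JL2023}.
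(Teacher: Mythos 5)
Your proposal follows essentially the same route as the paper, which does not prove this theorem from scratch but imports it wholesale: the convergence, PIDE characterization, and stochastic representation from \cite{HJLP2022} ($\alpha\in(1,2)$) and \cite{JL2023} ($\alpha\in(0,1]$), the regularity (ii) from Theorem 4.9 of \cite{HJLP2022} and Theorem 4.7 of \cite{JL2023}, and the comparison principle (iii) from Corollary 55 of \cite{HP2021} or Proposition 5.5 of \cite{NN2017}. The extra detail you supply for (ii) --- the $\delta$-interpolation $\min\{a,b\}\le a^{\delta}b^{1-\delta}$ combined with the $M_{\delta}$ moment bound --- is exactly the technique the paper itself uses for the discrete analogue in Theorem \ref{uh_regularity}, so your sketch is consistent with the intended argument.
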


\section{Discrete approximation scheme}

Our main idea is to construct a discrete approximation scheme through the
random variables sequence $\{Z_{i}\}_{i=1}^{\infty}$, and prove the error
bounds between the numerical solution and the viscosity solution of
\eqref{PIDE}, which in turn establishes the error bounds of the robust
$\alpha$-stable central limit theorem. For any fixed $h\in(0,1)$, $\phi \in
C_{b,Lip}(\mathbb{R}^{d})$, and $Z\overset{d}{=}Z_{1}$, we define
$u_{h}:[0,1]\times \mathbb{R}^{d}\mathbb{\rightarrow R}$\ recursively by
\begin{equation}%
\begin{array}
[c]{l}%
u_{h}(t,x)=\phi(x)\  \  \  \text{in }[0,h)\times \mathbb{R}^{d},\\
u_{h}(t,x)=\mathbb{\hat{E}}\big[u_{h}(t-h,x+h^{\frac{1}{\alpha}}%
Z)\big]\  \  \text{\ in }[h,1]\times \mathbb{R}^{d}.
\end{array}
\label{2.2}%
\end{equation}
By induction, we can verify that for any $k\in \mathbb{N}$ such that $kh\leq1$
and $x\in \mathbb{R}^{d}$
\[
u_{h}(kh,x)=\mathbb{\hat{E}}\Big[\phi \Big(x+h^{\frac{1}{\alpha}}\sum
_{i=1}^{k}Z_{i}\Big)\Big].
\]
Specially, by taking $h=\frac{1}{n}$, we get%
\[
u_{h}(1,0)=\mathbb{\hat{E}}\left[  \phi \left(  \frac{S_{n}}{\sqrt[\alpha]{n}%
}\right)  \right]  .
\]

We first provide the space and time regularity properties of $u_{h}$, which
plays an important role in the subsequent error estimates.

\begin{theorem}
\label{uh_regularity}Suppose that Assumption \ref{assump1} (i) holds. Then

\begin{description}
\item[(i)] for any $t\in \lbrack0,1]$ and $x,y\in \mathbb{R}^{d}$
\begin{equation}
\left \vert u_{h}(t,x)-u_{h}(t,y)\right \vert \leq C_{\phi}|x-y|;
\label{uh_regularity 1}%
\end{equation}

\item[(ii)] for any $t,s\in \lbrack0,1]$ and $x\in \mathbb{R}^{d}$
\begin{equation}
\left \vert u_{h}(t,x)-u_{h}(s,x)\right \vert \leq C_{\phi}^{\delta}(2\left \Vert
\phi \right \Vert _{\infty})^{1-\delta}M_{\delta}(|t-s|^{\frac{\delta}{\alpha}%
}+h^{\frac{\delta}{\alpha}}), \label{uh_regularity 2}%
\end{equation}

\end{description}

where $C_{\phi}$ is the Lipschitz constant of $\phi$.
\end{theorem}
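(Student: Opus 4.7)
The plan is to reduce both regularity statements to the closed-form representation
$u_h(t,x) = \mathbb{\hat{E}}[\phi(x + h^{1/\alpha}\sum_{i=1}^{k}Z_i)]$ valid whenever $t\in[kh,(k+1)h)\cap[0,1]$, with the convention that the empty sum at $k=0$ is zero. I would first prove this by induction on $k$: the base case is just the definition of $u_h$ on $[0,h)$, and the inductive step uses the defining recursion together with the identical distribution of the $Z_i$ and the independence/tower rule from Definition 2.1 (ii). This generalizes the identity already stated in the paper for the grid points $t=kh$ to arbitrary $t$ in the slab $[kh,(k+1)h)$.

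Once this is in hand, Part (i) is immediate: writing $W_k := h^{1/\alpha}\sum_{i=1}^{k}Z_i$, sub-additivity of $\mathbb{\hat{E}}$ together with the Lipschitz continuity of $\phi$ gives $|u_h(t,x)-u_h(t,y)| \leq \mathbb{\hat{E}}[|\phi(x+W_k)-\phi(y+W_k)|] \leq C_\phi|x-y|$.

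For Part (ii) I would take $s\leq t$ without loss of generality, set $k_t=\lfloor t/h\rfloor$, $k_s=\lfloor s/h\rfloor$, $m = k_t-k_s$, and split $W_{k_t}$ as $W' + W''$ where $W' := h^{1/\alpha}\sum_{i=1}^{k_s}Z_i$ and $W'' := h^{1/\alpha}\sum_{i=k_s+1}^{k_t}Z_i$. Using the tower rule with $W''$ independent of $W'$ and sub-additivity, the difference $|u_h(t,x)-u_h(s,x)|$ is bounded by $\mathbb{\hat{E}}[|\phi(y+W'')-\phi(y)|]$ evaluated at $y = x + W'$. The key input is the interpolation inequality $|\phi(a)-\phi(b)|\leq(C_\phi|a-b|)^\delta(2\|\phi\|_\infty)^{1-\delta}$ valid for $\delta\in(0,1]$, which trades Lipschitz control for a $\delta$-power control that matches the available moment hypothesis. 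Combined with the i.i.d. reduction $\sum_{i=k_s+1}^{k_t}Z_i\overset{d}{=}S_m$, the scaling $\mathbb{\hat{E}}[|S_m|^\delta] = m^{\delta/\alpha}\mathbb{\hat{E}}[|m^{-1/\alpha}S_m|^\delta]\leq m^{\delta/\alpha}M_\delta$ from Assumption \ref{assump1} (i), and the elementary bounds $mh\leq |t-s|+h$ together with the sub-additivity of $x\mapsto x^{\delta/\alpha}$ (valid since $\delta/\alpha<1$), this yields the claimed bound.

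The main obstacle is the non-integrable regime $\alpha\leq 1$, in which $Z_i$ has no finite first moment, so the naive Lipschitz-plus-first-moment approach to time regularity fails outright. The interpolation step above is precisely what bridges this gap: it turns the boundedness of $\phi$ into a $\delta$-power loss matching the only moment we have. Apart from this observation, the argument reduces to routine manipulations with sub-additivity, positive homogeneity, and the tower rule, with the extra $h^{\delta/\alpha}$ term in the bound simply reflecting the discretization offset between $mh$ and $|t-s|$.
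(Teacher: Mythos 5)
Your proposal is correct and follows essentially the same route as the paper: part (i) is the same Lipschitz/sub-additivity argument (the paper phrases it as induction on the recursion rather than via the closed-form slab representation), and part (ii) rests on exactly the paper's interpolation inequality $|\phi(a)-\phi(b)|\leq C_{\phi}^{\delta}(2\Vert\phi\Vert_{\infty})^{1-\delta}|a-b|^{\delta}$ combined with the moment bound $M_{\delta}$, the block decomposition via the tower rule, and the rounding-to-grid step that produces the extra $h^{\delta/\alpha}$. The only difference is organizational: the paper first proves the increment from $t=0$ and then transfers it to general grid increments, while you do both in one decomposition.
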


\begin{proof}
(i) The space regularity can be proved by induction with respect to $t$.
Indeed, (\ref{uh_regularity 1}) is obviously true for $t\in \lbrack0,h)$.
Suppose it holds true for $t\in \lbrack(k-1)h,kh)$ with $kh\leq1$. Then, for
$t\in \lbrack kh,(k+1)h\wedge1)$ and $x,y\in \mathbb{R}^{d}$
\[
\big \vert u_{h}(t,x)-u_{h}(t,y)\big \vert \leq \mathbb{\hat{E}}%
\big[\big \vert u_{h}(t-h,x+h^{\frac{1}{\alpha}}Z)-u_{h}(t-h,y+h^{\frac
{1}{\alpha}}Z)\big \vert \big]\leq C_{\phi}|x-y|.
\]

(ii) The time regularity will be proved in two steps. We first consider the
special case of $\left \vert u_{h}(kh,\cdot)-u_{h}(0,\cdot)\right \vert $ for
any $k\in \mathbb{N}$ with $kh\leq1$. By using (\ref{2.2}), we can recursively
obtain that
\[
u_{h}(kh,x)=\mathbb{\hat{E}}[\phi(x+h^{\frac{1}{\alpha}}S_{k})],
\]
for all $k\in \mathbb{N}$ with $kh\leq1$ and $x\in \mathbb{R}^{d}$. Note that
\begin{align*}
\left \vert \phi(x)-\phi(x^{\prime})\right \vert  &  \leq C_{\phi}\left(
|x^{\prime}-x|\wedge \frac{2\left \Vert \phi \right \Vert _{\infty}}{C_{\phi}%
}\right) \\
&  \leq C_{\phi}|x^{\prime}-x|^{\delta}\big(2\left \Vert \phi \right \Vert
_{\infty}C_{\phi}^{-1}\big)^{1-\delta}\\
&  =C_{\phi}^{\delta}(2\left \Vert \phi \right \Vert _{\infty})^{1-\delta
}|x^{\prime}-x|^{\delta},
\end{align*}
for all $x,x^{\prime}\in \mathbb{R}^{d}$ and some $0<\delta<\alpha$. Then,
under Assumption \ref{assump1}(i), we can deduce that \
\begin{align*}
|u_{h}(kh,x)-u_{h}(0,x)|  &  \leq \mathbb{\hat{E}}[|\phi(x+h^{\frac{1}{\alpha}%
}S_{k})-\phi(x)|]\\
&  \leq C_{\phi}^{\delta}(2\left \Vert \phi \right \Vert _{\infty})^{1-\delta
}\mathbb{\hat{E}}[|k^{-\frac{1}{\alpha}}S_{k}|^{\delta}](kh)^{\frac{\delta
}{\alpha}}\\
&  \leq C_{\phi}^{\delta}(2\left \Vert \phi \right \Vert _{\infty})^{1-\delta
}M_{\delta}(kh)^{\frac{\delta}{\alpha}},
\end{align*}
for all $k\in \mathbb{N}$ with $kh\leq1$ and $x\in \mathbb{R}^{d}$.\ This
follows that
\begin{equation}
|u_{h}(kh,x)-u_{h}(0,x)|\leq \tilde{C}(kh)^{\frac{\delta}{\alpha}}, \label{2.3}%
\end{equation}
where $\tilde{C}:=C_{\phi}^{\delta}(2\left \Vert \phi \right \Vert _{\infty
})^{1-\delta}M_{\delta}$.

We now consider the general case of $\left \vert u_{h}(t,\cdot)-u_{h}%
(s,\cdot)\right \vert $ for any $t,s\in \lbrack0,1]$. From (\ref{2.2}) and
(\ref{2.3}), we know that for any $x\in \mathbb{R}^{d}$ and $k,l\in \mathbb{N}$
such that $k\geq l$ and $(k\vee l)h\leq1$,%
\begin{align*}
\left \vert u_{h}(kh,x)-u_{h}(lh,x)\right \vert  &  =\big \vert \mathbb{\tilde
{E}}[u_{h}((k-l)h,x+h^{\frac{1}{\alpha}}S_{l})]-\mathbb{\tilde{E}}%
[u_{h}(0,x+h^{\frac{1}{\alpha}}S_{l})]\big \vert \\
&  \leq \mathbb{\tilde{E}}\big [\big \vert u_{h}((k-l)h,x+h^{\frac{1}{\alpha}%
}S_{l})-u_{h}(0,x+h^{\frac{1}{\alpha}}S_{l})\big \vert \big ]\\
&  \leq \tilde{C}((k-l)h)^{\frac{\delta}{\alpha}}.
\end{align*}
This yields that for $s,t\in \lbrack0,1]$, there exist constants $\delta
_{s},\delta_{t}\in \lbrack0,h)$ such that $s-\delta_{s}$ and $t-\delta_{t}$ are
in the grid points $\{kh:k\in \mathbb{N}\}$, the following inequality holds%
\begin{align*}
u_{h}(t,x)=u_{h}(t-\delta_{t},x)  &  \leq u_{h}(s-\delta_{s},x)+\tilde
{C}|t-s-\delta_{t}+\delta_{s}|^{\frac{\delta}{\alpha}}\\
&  \leq u_{h}(s,x)+\tilde{C}(|t-s|^{\frac{\delta}{\alpha}}+h^{\frac{\delta
}{\alpha}}).
\end{align*}
Similarly, we have%
\[
u_{h}(s,x)\leq u_{h}(t,x)+\tilde{C}(|t-s|^{\frac{\delta}{\alpha}}%
+h^{\frac{\delta}{\alpha}}).
\]
This implies the desired result.
\end{proof}

Let $\varepsilon \in(0,1)$ and $\zeta \in C^{\infty}(\mathbb{R}\times
\mathbb{R}^{d})$ be a nonnegative function with unit integral and support in
$[-1,0]\times B_{1}$. For any bounded function $v:[0,1]\times \mathbb{R}%
^{d}\rightarrow \mathbb{R}$\ with $\frac{1}{p}$-H\"{o}lder and Lipschitz
continuity in $(t,x)$, that is,
\[
|v(t,x)-v(s,y)|\leq C(|t-s|^{\frac{1}{p}}+|x-y|),\text{ \ for some }p>1,
\]
where $C>0$\ is a constant. Define the mollification of a suitable extension
of $v$ to $[0,1+\varepsilon^{p}]$
\begin{equation}
v^{\varepsilon}(t,x):=(v\ast \zeta_{\varepsilon,p})(t,x)=\int_{-\varepsilon
^{p}<\tau<0}\int_{|e|<\varepsilon}v(t-\tau,x-e)\zeta_{\varepsilon,p}%
(\tau,e)ded\tau, \label{v_varp}%
\end{equation}
where $\zeta_{\varepsilon,p}(t,x):=\varepsilon^{-(p+d)}\zeta(t/\varepsilon
^{p},x/\varepsilon)$ is an infinitely differentiable function. Clearly,
$v^{\varepsilon}\in C_{b}^{\infty}$ and the standard estimates for mollifiers
imply that
\begin{equation}
\left \Vert v-v^{\varepsilon}\right \Vert _{\infty}\leq2C\varepsilon \text{
\  \ and \  \ }\left \Vert \partial_{t}^{l}D_{x}^{k}v^{\varepsilon}\right \Vert
_{\infty}\leq2C\varepsilon^{1-pl-k}\text{ \  \ for\ }l,k\in \mathbb{N}\text{,}
\label{v_varp_estimate}%
\end{equation}
where $M_{\zeta}:=\max \limits_{k+l\geq1}\int_{-1<t<0}\int_{|x|<1}|\partial
_{t}^{l}D_{x}^{k}\zeta(t,x)|dxdt<\infty$.

Now we give the consistency properties of the discrete approximation scheme
(\ref{2.2}).

\begin{theorem}
\label{consistency}Suppose that Assumption \ref{assump1} holds. Let
$v^{\varepsilon}\in C_{b}^{\infty}\ $be the smooth function defined in
(\ref{v_varp}).\ Then the following inequality holds in $[h,1]\times
\mathbb{R}^{d}$%
\begin{align*}
&  \left \vert \partial_{t}v_{\varepsilon}(t,x)-\sup \limits_{F_{\mu}%
\in \mathcal{L}}\int_{\mathbb{R}^{M}}\delta_{\lambda}^{\alpha}v_{\varepsilon
}(t,x)F_{\mu}(d\lambda)-h^{-1}\left(  v_{\varepsilon}(t,x)-\mathbb{\hat{E}%
}\big[v_{\varepsilon}(t-h,x+h^{\frac{1}{\alpha}}Z)\big]\right)  \right \vert \\
&  \leq C\varepsilon^{1-2p}h+8C^{2}M_{\delta}\varepsilon^{1-p-\delta}%
h^{\frac{\delta}{\alpha}}+l_{v_{\varepsilon}}(h),
\end{align*}
for all $\varepsilon \in(0,1)$.
\end{theorem}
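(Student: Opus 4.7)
The plan is to rewrite the quantity inside $|\cdot|$ as
\[
E(t,x) := \partial_t v_\varepsilon(t,x) + h^{-1}\hat{\mathbb{E}}\bigl[v_\varepsilon(t-h, x+h^{1/\alpha}Z) - v_\varepsilon(t,x)\bigr] - \sup_{F_\mu \in \mathcal{L}} \int_{\mathbb{R}^M} \delta_\lambda^\alpha v_\varepsilon(t,x)\,F_\mu(d\lambda),
\]
where the constant $v_\varepsilon(t,x)$ was absorbed into $\hat{\mathbb{E}}$ by constant preservation. The strategy is to reduce the middle term, through two successive $\hat{\mathbb{E}}$-replacements, to an expression that Assumption \ref{assump1}(ii) bounds directly at the point $(t,x)$ with the smooth test function $v_\varepsilon(t,\cdot)$.

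First, a second-order Taylor expansion in time yields $v_\varepsilon(t-h, y) = v_\varepsilon(t,y) - h\,\partial_t v_\varepsilon(t, y) + r_1(y)$ with $|r_1(y)| \leq \tfrac{h^2}{2}\|\partial_t^2 v_\varepsilon\|_\infty \leq Ch^2\varepsilon^{1-2p}$ by the mollifier bound \eqref{v_varp_estimate}. Since $|\hat{\mathbb{E}}[A] - \hat{\mathbb{E}}[B]| \le \hat{\mathbb{E}}[|A-B|]$, this lets me replace $v_\varepsilon(t-h, x+h^{1/\alpha}Z)$ by $v_\varepsilon(t, x+h^{1/\alpha}Z) - h\,\partial_t v_\varepsilon(t, x+h^{1/\alpha}Z)$ inside $\hat{\mathbb{E}}$ at a cost of $Ch^2\varepsilon^{1-2p}$.

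The key step is the next replacement: I substitute the random quantity $\partial_t v_\varepsilon(t, x+h^{1/\alpha}Z)$ by the constant $\partial_t v_\varepsilon(t, x)$ so it can be pulled out of $\hat{\mathbb{E}}$. The induced error is controlled by $h\,\hat{\mathbb{E}}\bigl[|\partial_t v_\varepsilon(t, x+h^{1/\alpha}Z) - \partial_t v_\varepsilon(t, x)|\bigr]$. Here the naive $L^\infty$ bound $h\|\partial_t v_\varepsilon\|_\infty = O(h\varepsilon^{1-p})$ is insufficient, because after dividing by $h$ it produces an $O(\varepsilon^{1-p})$ term with no $h^{\delta/\alpha}$ gain. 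Instead, I combine the two mollifier estimates $\|\partial_t v_\varepsilon\|_\infty \le 2C\varepsilon^{1-p}$ and $\|D_x \partial_t v_\varepsilon\|_\infty \le 2C\varepsilon^{-p}$ via $\min(a,b)\le a^\delta b^{1-\delta}$, obtaining
\[
|\partial_t v_\varepsilon(t, y) - \partial_t v_\varepsilon(t, x)| \le 4C\,\varepsilon^{1-p-\delta}|y-x|^\delta.
\]
Evaluating at $y = x+h^{1/\alpha}Z$ and using the $\delta$-moment bound $\hat{\mathbb{E}}[|Z|^\delta]\le M_\delta$ from Assumption \ref{assump1}(i) (with $n=1$) bounds this second error by $O(M_\delta\varepsilon^{1-p-\delta}h^{1+\delta/\alpha})$, which produces exactly the $h^{\delta/\alpha}\varepsilon^{1-p-\delta}$ contribution in the target estimate after dividing by $h$.

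Finally, after these two replacements the constant $h\partial_t v_\varepsilon(t,x)$ can be pulled out of $\hat{\mathbb{E}}$, cancelling the $\partial_t v_\varepsilon(t,x)$ term in $E(t,x)$. What is left is
\[
h^{-1}\hat{\mathbb{E}}[v_\varepsilon(t, x+h^{1/\alpha}Z) - v_\varepsilon(t, x)] - \sup_{F_\mu\in\mathcal{L}}\int_{\mathbb{R}^M} \delta_\lambda^\alpha v_\varepsilon(t, x)\,F_\mu(d\lambda),
\]
which is precisely what Assumption \ref{assump1}(ii), applied to the smooth test function $\varphi = v_\varepsilon(t,\cdot) \in C_b^3$ at $s = h$, bounds by $l_{v_\varepsilon}(h)$ uniformly in $x$. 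Summing the three error contributions yields the stated bound. The main obstacle is the $\delta$-interpolation in the third paragraph: it is the analogue for the consistency estimate of the $\delta$-trick used in Theorem \ref{uh_regularity} and is what enables the argument to cover the non-integrable regime $\alpha\in(0,1]$, where only a fractional moment of $Z$ is available.
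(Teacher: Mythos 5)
Your proposal is correct and follows essentially the same route as the paper: it isolates the nonlocal part so that Assumption \ref{assump1}(ii) applied to $\varphi=v_\varepsilon(t,\cdot)$ gives the $l_{v_\varepsilon}(h)$ term, and controls the time-shift error by a Taylor expansion in $t$ together with the $\delta$-interpolation between $\left\Vert \partial_t v^\varepsilon\right\Vert_\infty$ and $\left\Vert \partial_t D_x v^\varepsilon\right\Vert_\infty$ combined with $\mathbb{\hat E}[|Z|^\delta]\le M_\delta$. The only cosmetic difference is that the paper uses a first-order Taylor formula with integral remainder and bounds the time and space increments of $\partial_t v^\varepsilon$ in one triangle inequality, whereas you perform a second-order expansion and a separate spatial replacement; the resulting error terms are identical.
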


\begin{proof}
We first split the consistency error into two parts, for any $(t,x)\in \lbrack
h,1]\times \mathbb{R}^{d}$
\begin{align*}
R  &  :=\left \vert \partial_{t}v_{\varepsilon}(t,x)-\sup \limits_{F_{\mu}%
\in \mathcal{L}}\int_{\mathbb{R}^{M}}\delta_{\lambda}^{\alpha}v_{\varepsilon
}(t,x)F_{\mu}(d\lambda)-h^{-1}\left(  v_{\varepsilon}(t,x)-\mathbb{\hat{E}%
}\big[v_{\varepsilon}(t-h,x+h^{\frac{1}{\alpha}}Z)\big]\right)  \right \vert \\
&  \leq h^{-1}\left \vert \partial_{t}v_{\varepsilon}(t,x)h+\mathbb{\hat{E}%
}\big[v_{\varepsilon}(t-h,x+h^{\frac{1}{\alpha}}Z)\big]-\mathbb{\hat{E}%
}\big[v_{\varepsilon}(t,x+h^{\frac{1}{\alpha}}Z)\big]\right \vert \\
&  \  \  \ +h^{-1}\left \vert \mathbb{\hat{E}}\big[v_{\varepsilon}(t,x+h^{\frac
{1}{\alpha}}Z)\big]-v_{\varepsilon}(t,x)-h\sup \limits_{F_{\mu}\in \mathcal{L}%
}\int_{\mathbb{R}^{M}}\delta_{\lambda}^{\alpha}v_{\varepsilon}(t,x)F_{\mu
}(d\lambda)\right \vert \\
&  :=R_{1}+R_{2}.
\end{align*}
For the part $R_{1}$, from Taylor's expansion we have%
\[
v_{\varepsilon}(t,x+h^{\frac{1}{\alpha}}Z)=v_{\varepsilon}(t-h,x+h^{\frac
{1}{\alpha}}Z)+\int_{t-h}^{t}\partial_{t}v_{\varepsilon}(s,x+h^{\frac
{1}{\alpha}}Z)ds.
\]
This implies that%
\begin{align}
R_{1}  &  =h^{-1}\bigg \vert \mathbb{\hat{E}}\big[\partial_{t}v_{\varepsilon
}(t,x)h+v_{\varepsilon}(t-h,x+h^{\frac{1}{\alpha}}Z)\big]\nonumber \\
&  \text{ \  \ }-\mathbb{\hat{E}}\bigg [v_{\varepsilon}(t-h,x+h^{\frac
{1}{\alpha}}Z)+\int_{t-h}^{t}\partial_{t}v_{\varepsilon}(s,x+h^{\frac
{1}{\alpha}}Z)ds\bigg ]\bigg \vert \label{2.4}\\
&  \leq h^{-1}\int_{t-h}^{t}\mathbb{\hat{E}}\big[|\partial_{t}v_{\varepsilon
}(t,x)-\partial_{t}v_{\varepsilon}(s,x+h^{\frac{1}{\alpha}}%
Z)|\big]ds\nonumber \\
&  \leq \frac{1}{2}\left \Vert \partial_{t}^{2}v^{\varepsilon}\right \Vert
_{\infty}h+\left \Vert \partial_{t}D_{x}v^{\varepsilon}\right \Vert _{\infty
}^{\delta}(2\left \Vert \partial_{t}v^{\varepsilon}\right \Vert _{\infty
})^{1-\delta}h^{\frac{\delta}{\alpha}}M_{\delta},\nonumber
\end{align}
for all $(t,x)\in \lbrack h,1]\times \mathbb{R}^{d}$, where we have used the
fact that%
\begin{align*}
&  |\partial_{t}v_{\varepsilon}(t,x)-\partial_{t}v_{\varepsilon}%
(s,x+h^{\frac{1}{\alpha}}Z)|\\
&  \leq|\partial_{t}v_{\varepsilon}(t,x)-\partial_{t}v_{\varepsilon
}(s,x)|+|\partial_{t}v_{\varepsilon}(s,x)-\partial_{t}v_{\varepsilon
}(s,x+h^{\frac{1}{\alpha}}Z)|\\
&  \leq \left \Vert \partial_{t}^{2}v^{\varepsilon}\right \Vert _{\infty
}(t-s)+\left \Vert \partial_{t}D_{x}v^{\varepsilon}\right \Vert _{\infty
}^{\delta}(2\left \Vert \partial_{t}v^{\varepsilon}\right \Vert _{\infty
})^{1-\delta}h^{\frac{\delta}{\alpha}}\mathbb{\hat{E}}[|Z|^{\delta}].
\end{align*}
For the part $R_{2}$, applying Assumption \ref{assump1} (ii) to
$v^{\varepsilon}(t,\cdot)$, we see that $R_{2}\leq l_{v_{\varepsilon}}(h)$.
Combining this with (\ref{v_varp_estimate}) and (\ref{2.4}), from Assumption
\ref{assump1} (i), we have%
\[
R\leq C\varepsilon^{1-2p}h+8C^{2}M_{\delta}\varepsilon^{1-p-\delta}%
h^{\frac{\delta}{\alpha}}+l_{v_{\varepsilon}}(h),
\]
which we complete the proof.
\end{proof}

By means of the recursive structure of \eqref{2.2}, we obtain the following
comparison principle. For the proof of lemma, one can refer to Lemma 3.2 in
\cite{BJ2007}.

\begin{lemma}
\label{comparison}Suppose that $\underline{v},\bar{v}\in C_{b}([0,1]\times
\mathbb{R}^{d}\mathbb{)}$ and $h_{1},h_{2}\in C_{b}([h,1]\times \mathbb{R}%
^{d})$ satisfy
\begin{align*}
\frac{\underline{v}(t,x)-\mathbb{\hat{E}[}\underline{v}(t-h,x+h^{\frac
{1}{\alpha}}Z)]}{h}  &  \leq h_{1}(t,x)\quad \\
\frac{\bar{v}(t,x)-\mathbb{\hat{E}[}\bar{v}(t-h,x+h^{\frac{1}{\alpha}}Z)]}{h}
&  \geq h_{2}(t,x)
\end{align*}
for all $(t,x)\in \lbrack h,1]\times \mathbb{R}^{d}$. Then, for any
$(t,x)\in \lbrack0,1]\times \mathbb{R}^{d}$,
\[
\underline{v}-\bar{v}\leq \sup_{(t,x)\in \lbrack0,h)\times \mathbb{R}^{d}%
}(\underline{v}-\bar{v})^{+}+t\sup_{(t,x)\in \lbrack h,1]\times \mathbb{R}^{d}%
}(h_{1}-h_{2})^{+}\text{.}%
\]

\end{lemma}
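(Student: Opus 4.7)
The plan is to prove the discrete comparison principle by a straightforward induction on the number of time steps, exploiting the monotonicity, sub-additivity, and constant-preservation properties of the sublinear expectation $\mathbb{\hat{E}}$. Set $w:=\underline{v}-\bar{v}$, and denote
\[
B:=\sup_{(t,x)\in[0,h)\times \mathbb{R}^{d}}w^{+}(t,x),\qquad A:=\sup_{(t,x)\in[h,1]\times \mathbb{R}^{d}}(h_{1}-h_{2})^{+}(t,x).
\]
The goal is to show $w(t,x)\leq B+tA$ for every $(t,x)\in[0,1]\times \mathbb{R}^{d}$.

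First, I rearrange the two hypotheses into $\underline{v}(t,x)\leq \mathbb{\hat{E}}[\underline{v}(t-h,x+h^{1/\alpha}Z)]+h\,h_{1}(t,x)$ and $\bar{v}(t,x)\geq \mathbb{\hat{E}}[\bar{v}(t-h,x+h^{1/\alpha}Z)]+h\,h_{2}(t,x)$, valid on $[h,1]\times \mathbb{R}^{d}$. Subtracting and applying sub-additivity in the form $\mathbb{\hat{E}}[X]-\mathbb{\hat{E}}[Y]\leq \mathbb{\hat{E}}[X-Y]$ (valid for bounded $X,Y$ since $\mathbb{\hat{E}}[X]=\mathbb{\hat{E}}[(X-Y)+Y]\leq \mathbb{\hat{E}}[X-Y]+\mathbb{\hat{E}}[Y]$) yields the key one-step recursion
\[
w(t,x)\leq \mathbb{\hat{E}}\bigl[w(t-h,x+h^{1/\alpha}Z)\bigr]+h\,(h_{1}-h_{2})^{+}(t,x)\leq \mathbb{\hat{E}}\bigl[w(t-h,x+h^{1/\alpha}Z)\bigr]+hA,
\]
for all $(t,x)\in[h,1]\times \mathbb{R}^{d}$.

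Next, I induct on the integer $k\geq 0$ to prove the claim
\[
w(t,x)\leq B+khA\qquad \text{for all }t\in[kh,(k+1)h)\cap[0,1],\;x\in \mathbb{R}^{d}.
\]
The base case $k=0$ is immediate from the definition of $B$. For the inductive step, if $t\in[(k+1)h,(k+2)h)$ then $t-h\in[kh,(k+1)h)$, so by the induction hypothesis $w(t-h,y)\leq B+khA$ for every $y$. Combining this uniform bound with the one-step recursion and using monotonicity together with constant preservation of $\mathbb{\hat{E}}$ gives
\[
w(t,x)\leq \mathbb{\hat{E}}\bigl[B+khA\bigr]+hA=B+(k+1)hA.
\]
Since $t\geq kh$ for $t\in[kh,(k+1)h)$, one has $B+khA\leq B+tA$, which yields the asserted bound on all of $[0,1]\times \mathbb{R}^{d}$.

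There is no real obstacle here: everything is driven by the elementary sublinear-expectation inequalities, and the induction is clean because the scheme is backward of step size exactly $h$, so $t-h$ lies one interval lower than $t$. The only minor point to be careful about is that $w(t-h,\cdot)$ must be bounded so that monotonicity applies to the constant majorant, which is ensured by $\underline{v},\bar{v}\in C_{b}$. This is the same argument as Lemma 3.2 of \cite{BJ2007}, adapted to the nonlocal increment $x+h^{1/\alpha}Z$ under $\mathbb{\hat{E}}$.
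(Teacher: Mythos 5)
Your proof is correct and follows essentially the same route as the reference the paper points to (Lemma 3.2 of Barles--Jakobsen): an induction over time levels of length $h$, using sub-additivity to get the one-step recursion $w(t,x)\leq \mathbb{\hat{E}}[w(t-h,x+h^{1/\alpha}Z)]+hA$ and then monotonicity with constant preservation to propagate the bound. The paper itself omits the argument and simply cites that lemma, so your write-up is a faithful specialization of the standard monotone-scheme comparison proof to this sublinear-expectation setting.
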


\section{Error bounds}

In this section, we shall prove the error bounds of the discrete approximation
scheme $u_{\Delta}$. The convergence of the approximate solution $u_{\Delta}$
to the viscosity solution $u$ follows from a nonlocal extension of the
Barles-Souganidis half-relaxed limits method \cite{BS1991} and Krylov's
regularization results \cite{Krylov1997,Krylov2000} (see also Barles-Jakobsen
\cite{BJ2002,BJ2005}). Now we state our main result.

\begin{theorem}
\label{main theorem}Suppose that Assumption \ref{assump1} holds and
$h\in(0,1)$. Let $u$ be the unique solution of \eqref{PIDE} and let $u_{h}$ be
the unique solution of \eqref{2.2}. Then

\begin{description}
\item[(i)(Upper bound)] for all $\varepsilon \in(0,1)$%
\[
u_{h}-u\leq Kh^{^{\frac{\delta}{\alpha}}}+4K\varepsilon+\rho_{1}%
(\varepsilon,h)\text{ \  \ in }[0,1]\times \mathbb{R}^{d};
\]

\item[(ii)(Lower bound)] for all $\varepsilon \in(0,1)$%
\[
u-u_{h}\leq2Kh^{\frac{\delta}{\alpha}}+4K\varepsilon \text{ }+\rho
_{2}(\varepsilon,h)\text{ \  \ in }[0,1]\times \mathbb{R}^{d},
\]

\end{description}

where $\rho_{1}(\varepsilon,h)$ and $\rho_{2}(\varepsilon,h)$ are defined in
(\ref{rho1}) and (\ref{rho2}), respectively.
\end{theorem}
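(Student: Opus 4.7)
The plan is to deploy the Krylov--Barles--Jakobsen mollification technique in the nonlocal, sublinear-expectation setting: for each of the two bounds, I will regularize one of the two solutions ($u$ for the upper bound, $u_{h}$ for the lower) by convolving against $\zeta_{\varepsilon,p}$ with $p=\alpha/\delta$ so that the $\varepsilon^{p}$ time scale matches the $\delta/\alpha$ H\"older exponent in time, use Theorem \ref{consistency} to transfer the regularized function into an approximate solution of the other equation, and then close via the appropriate comparison principle.

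For the upper bound I mollify $u$: set $u^{\varepsilon}:=u\ast \zeta_{\varepsilon,p}$ after a Lipschitz/H\"older extension of $u$ to $[0,1+\varepsilon^{p}]$. Theorem \ref{stable limit theorem}(ii) combined with (\ref{v_varp_estimate}) immediately yields $\|u-u^{\varepsilon}\|_{\infty}\leq 2K\varepsilon$. Since $L[u]:=\sup_{F_{\mu}\in \mathcal{L}}\int \delta^{\alpha}_{\lambda}u\,F_{\mu}(d\lambda)$ is a supremum of linear functionals and is therefore convex in $u$, Jensen's inequality applied to the unit-mass convolution gives $L[u^{\varepsilon}]\leq (Lu)^{\varepsilon}$, so $u^{\varepsilon}$ is a pointwise supersolution of the PIDE. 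Theorem \ref{consistency} then produces $h^{-1}\bigl(u^{\varepsilon}(t,x)-\hat{\mathbb{E}}[u^{\varepsilon}(t-h,x+h^{1/\alpha}Z)]\bigr)\geq -\rho_{1}(\varepsilon,h)$ on $[h,1]\times \mathbb{R}^{d}$, where $\rho_{1}$ collects the three consistency terms. Lemma \ref{comparison} applied with $\underline{v}=u_{h}$, $\bar{v}=u^{\varepsilon}$, $h_{1}\equiv 0$, $h_{2}\equiv -\rho_{1}$ then yields $u_{h}-u^{\varepsilon}\leq \sup_{[0,h)}(u_{h}-u^{\varepsilon})^{+}+\rho_{1}$. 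Because $u_{h}=\phi=u(0,\cdot)$ on $[0,h)$, the initial layer is dominated by $Kh^{\delta/\alpha}+2K\varepsilon$ via the H\"older modulus of $u$, and adding back $u^{\varepsilon}-u$ gives the stated bound.

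The lower bound proceeds symmetrically but mollifies $u_{h}$: Theorem \ref{uh_regularity} produces $\|u_{h}-u_{h}^{\varepsilon}\|_{\infty}\leq 2K\varepsilon+Kh^{\delta/\alpha}$, and the extra $h^{\delta/\alpha}$ is precisely what doubles the coefficient of $h^{\delta/\alpha}$ in the final bound. Sublinearity of $\hat{\mathbb{E}}$ makes the scheme functional $u\mapsto \hat{\mathbb{E}}[u(t-h,\cdot+h^{1/\alpha}Z)]-u$ convex in $u$, so Jensen's inequality again shows that $u_{h}^{\varepsilon}$ is a classical supersolution of the scheme, and Theorem \ref{consistency} transfers this into $\partial_{t}u_{h}^{\varepsilon}-L[u_{h}^{\varepsilon}]\geq -\rho_{2}(\varepsilon,h)$. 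Adding the linear-in-$t$ correction $\rho_{2}(\varepsilon,h)\,t$ turns $u_{h}^{\varepsilon}+\rho_{2}\,t$ into a classical (hence viscosity) supersolution of the PIDE, against which Theorem \ref{stable limit theorem}(iii) dominates $u$ up to the initial-data gap $|u_{h}^{\varepsilon}(0,\cdot)-\phi|$, itself absorbed by the regularity estimate. Combining with $\|u_{h}-u_{h}^{\varepsilon}\|_{\infty}$ yields the claimed inequality.

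The main technical obstacle is the Jensen--mollification step: the time-space convolution must be calibrated through the choice $p=\alpha/\delta$ so that $u^{\varepsilon}$ and $u_{h}^{\varepsilon}$ remain pointwise supersolutions of the relevant nonlinear operators, and all three rate exponents inside $\rho_{1}$ and $\rho_{2}$ must be tracked through Theorem \ref{consistency} keeping the constants $K$, $M_{\delta}$, $C_{\phi}$ intact. A secondary care point is the asymmetry between the two bounds: the upper-bound initial layer is controlled by the H\"older regularity of $u$ alone, whereas the lower bound inherits an unavoidable $O(h^{\delta/\alpha})$ contribution from $u_{h}^{\varepsilon}(0,\cdot)-\phi$ reflecting the limited time regularity of the discrete scheme, which is why the coefficient in front of $h^{\delta/\alpha}$ jumps from $K$ to $2K$.
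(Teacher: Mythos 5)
Your proposal is correct and follows essentially the same route as the paper: mollify $u$ with $p=\alpha/\delta$, use convexity of the nonlocal operator to make $u^{\varepsilon}$ a supersolution, pass through Theorem \ref{consistency} and the discrete comparison Lemma \ref{comparison} for the upper bound; then mollify $u_{h}$, use sublinearity of $\mathbb{\hat{E}}$ to make $u_{h}^{\varepsilon}$ a scheme supersolution, convert it via consistency into a PIDE supersolution $u_{h}^{\varepsilon}+\rho_{2}(t-h)$, and close with the viscosity comparison principle of Theorem \ref{stable limit theorem}(iii). The only cosmetic difference is the bookkeeping of the extra $Kh^{\delta/\alpha}$ in the lower bound, which the paper attributes to the estimate $|u-u_{h}|\leq 2Kh^{\delta/\alpha}$ on the initial layer $[0,h]$ (combining the H\"older regularity of $u$ with that of $u_{h}$) rather than to $\Vert u_{h}-u_{h}^{\varepsilon}\Vert_{\infty}$, but both accountings yield the stated constants.
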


\begin{proof}
(i) Upper bound. In view of the continuity result in (\ref{u_regularity}), we
define $u^{\varepsilon}:=u\ast \zeta_{\varepsilon,p}$\ in (\ref{v_varp}) with
$p=\frac{\alpha}{\delta}$ satisfying%
\begin{equation}
\left \Vert u-u^{\varepsilon}\right \Vert _{\infty}\leq2K\varepsilon \text{
\  \ and \  \ }\left \Vert \partial_{t}^{l}D_{x}^{k}u^{\varepsilon}\right \Vert
_{\infty}\leq2K\varepsilon^{1-l\alpha/\delta-k}\text{ \  \ for\ }%
k,l\in \mathbb{N}. \label{u_mollifer}%
\end{equation}
It is worth noting that $u(t-\tau,x-e)$ is a viscosity solution of
(\ref{PIDE}) in $[0,1]\times \mathbb{R}^{d}$ for any $(\tau,e)\in
(-\varepsilon^{\alpha/\delta},0)\times B(0,\varepsilon)$. From the concavity
of (\ref{PIDE}) with respect to the nonlocal term, it follows that
$u^{\varepsilon}(t,x)$ is a supersolution of (\ref{PIDE}) in $(0,1]\times
\mathbb{R}^{d}$, that is, for $(t,x)\in(0,1]\times \mathbb{R}^{d}$,%
\begin{equation}
\partial_{t}u^{\varepsilon}(t,x)-\sup \limits_{F_{\mu}\in \mathcal{L}}%
\int_{\mathbb{R}^{M}}\delta_{\lambda}^{\alpha}u^{\varepsilon}(t,x)F_{\mu
}(d\lambda)\geq0. \label{3.1}%
\end{equation}
Theorem \ref{consistency}\ yields that%
\begin{equation}
\frac{u^{\varepsilon}(t,x)-\mathbb{\hat{E}}[u^{\varepsilon}(t-h,x+h^{\frac
{1}{\alpha}}Z)]}{h}\geq-K\varepsilon^{1-2\alpha/\delta}h-8K^{2}M_{\delta
}\varepsilon^{1-\alpha/\delta-\delta}h^{\delta/\alpha}-l_{u^{\varepsilon}%
}(h):=-\rho_{1}(\varepsilon,h). \label{rho1}%
\end{equation}
By using Lemma \ref{comparison} to compare $u_{h}$ and $u^{\varepsilon}$, we
have
\begin{equation}
u_{h}-u^{\varepsilon}\leq \sup_{(t,x)\in \lbrack0,h)\times \mathbb{R}^{d}}%
(u_{h}-u^{\varepsilon})^{+}+\rho_{1}(\varepsilon,h)\text{ \ in\ }%
[0,1]\times \mathbb{R}^{d}. \label{3.2}%
\end{equation}
In addition, from (\ref{u_regularity}) we see that
\begin{equation}
|u(t,x)-u_{h}(t,x)|=|u(t,x)-u(0,x)|\leq Kh^{\frac{\delta}{\alpha}} \label{3.3}%
\end{equation}
for all $(t,x)\in \lbrack0,h)\times \mathbb{R}^{d}$. Together with
(\ref{u_mollifer}) and (\ref{3.2})-(\ref{3.3}), we can deduce that
\begin{align*}
u_{h}-u  &  =u_{h}-u^{\varepsilon}+u^{\varepsilon}-u\\
&  \leq \sup_{(t,x)\in \lbrack0,h)\times \mathbb{R}^{d}}(u_{h}-u)^{+}+\left \Vert
u-u^{\varepsilon}\right \Vert _{\infty}+\rho_{1}(\varepsilon,h)+2K\varepsilon \\
&  \leq Kh^{\frac{\delta}{\alpha}}+4K\varepsilon+\rho_{1}(\varepsilon,h)\text{
\ in\ }[0,1]\times \mathbb{R}^{d}.
\end{align*}

(ii) Lower bound. For $\varepsilon \in(0,1)$, in view of Theorem
\ref{uh_regularity} (ii), we denote the mollification of $u_{h}$ by
$u_{h}^{\varepsilon}:=u_{h}\ast \zeta_{\varepsilon,p}$\ in (\ref{v_varp}) with
$p=\frac{\alpha}{\delta}$ satisfying%
\begin{equation}
\left \Vert u_{h}-u_{h}^{\varepsilon}\right \Vert _{\infty}\leq2K\varepsilon
\text{ \  \ and \  \ }\left \Vert \partial_{t}^{l}D_{x}^{k}u_{h}^{\varepsilon
}\right \Vert _{\infty}\leq2K(\varepsilon+h^{\frac{\delta}{\alpha}}%
)\varepsilon^{-l\alpha/\delta-k}\text{ \  \ for\ }k,l\in \mathbb{N}.
\label{uh_mollifer}%
\end{equation}
Using the convexity of $\mathbb{\hat{E}}$, we can derive that for each
$(t,x)\in \lbrack h,1]\times \mathbb{R}^{d}$
\[
\mathbb{\hat{E}}[u_{h}^{\varepsilon}(t-h,x+h^{\frac{1}{\alpha}}Z)]\leq
\mathbb{\hat{E}}[u_{h}(t-h,x+h^{\frac{1}{\alpha}}Z)]\ast \zeta_{\varepsilon
,p}=u_{h}(t,x)\ast \zeta_{\varepsilon,p}=u_{h}^{\varepsilon}(t,x),
\]
which implies that%
\[
\frac{u_{h}^{\varepsilon}(t,x)-\mathbb{\hat{E}}[u_{h}^{\varepsilon
}(t-h,x+h^{\frac{1}{\alpha}}Z)]}{h}\geq0.
\]
In view of Theorem \ref{consistency}, we then have for any $(t,x)\in \lbrack
h,1]\times \mathbb{R}^{d}$%
\begin{align}
&  \partial_{t}u_{h}^{\varepsilon}(t,x)-\sup \limits_{F_{\mu}\in \mathcal{L}%
}\int_{\mathbb{R}^{M}}\delta_{\lambda}^{\alpha}u_{h}^{\varepsilon}(t,x)F_{\mu
}(d\lambda)\label{rho2}\\
&  \geq-K(\varepsilon+h^{\frac{\delta}{\alpha}})\varepsilon^{-2\alpha/\delta
}h-8K^{2}M_{\delta}(\varepsilon+h^{\frac{\delta}{\alpha}})\varepsilon
^{-\alpha/\delta-\delta}h^{\delta/\alpha}-l_{u_{h}^{\varepsilon}}%
(h):=-\rho_{2}(\varepsilon,h).\nonumber
\end{align}
This means that
\[
\bar{u}(t,x):=u_{h}^{\varepsilon}(t,x)+\rho_{2}(\varepsilon
,h)(t-h)\text{\  \ is a viscosity supersolution of (\ref{PIDE}) in }%
(h,1]\times \mathbb{R}^{d}%
\]
with the initial condition $\bar{u}(h,x)=u_{h}^{\varepsilon}(h,x)$. In
addition, from (\ref{u_regularity}) and (\ref{uh_regularity 2}) we know that
\begin{equation}
|u-u_{h}|\leq2Kh^{\frac{\delta}{\alpha}}\text{ \ in }[0,h]\times \mathbb{R}%
^{d}. \label{3.4}%
\end{equation}
Combining this with the property of mollifier (\ref{uh_mollifer}), we get
\[
\underline{u}(t,x):=u(t,x)-2Kh^{\frac{\delta}{\alpha}}-2K\varepsilon
\text{\  \ is a viscosity subsolution of (\ref{PIDE}) in }(h,1]\times
\mathbb{R}^{d}%
\]
with the initial condition%
\begin{align*}
\underline{u}(h,x)  &  =u(h,x)-2Kh^{\frac{\delta}{\alpha}}-2K\varepsilon \\
&  =u_{h}^{\varepsilon}(h,x)+(u(h,x)-u_{h}(h,x))+(u_{h}(h,x)-u_{h}%
^{\varepsilon}(h,x))-2Kh^{\frac{\delta}{\alpha}}-2K\varepsilon \\
&  \leq u_{h}^{\varepsilon}(h,x)=\bar{u}(h,x).
\end{align*}
From the comparison principle (Theorem \ref{uh_regularity} (iii)), we have
$\underline{u}\leq \bar{u}$ in $[h,1]\times \mathbb{R}^{d}$. Then, it follows
from (\ref{uh_mollifer}) and (\ref{3.4}) that
\[
u-u_{h}\leq2Kh^{\frac{\delta}{\alpha}}+4K\varepsilon \text{ }+\rho
_{2}(\varepsilon,h)\text{\  \ in }[0,1]\times \mathbb{R}^{d},
\]
and this completes the proof.
\end{proof}

\begin{remark}
We remark that the error bounds in Theorem \ref{main theorem} always decrease
as $h$ decrease, but it is not applicable to $\varepsilon$. In fact, the
functions $\rho_{1}$ and $\rho_{2}$ explode as $\varepsilon \rightarrow0$, but
the error bounds can be small if and only if $\varepsilon$ is small.
Therefore, in order to obtain an optimal convergence rate, one has to minimize
the error bounds over $\varepsilon$ and the optimal choice will depend on $h$.
Some specific examples will be presented in Section \ref{Sec ex} to
demonstrate the above process and derive the optimal rate.
\end{remark}

The following result can be immediately obtain.

\begin{theorem}
\label{main theorem CLT}Suppose that Assumption \ref{assump1} holds. Then
\[
2Kn^{-\frac{\delta}{\alpha}}+4K\varepsilon \text{ }+\rho_{2}(\varepsilon
,n^{-1})\leq \mathbb{\hat{E}}\left[  \phi \left(  \frac{S_{n}}{\sqrt[\alpha]{n}%
}\right)  \right]  -\mathbb{\tilde{E}}[\phi(\tilde{\zeta}_{1})]\leq
Kn^{-\frac{\delta}{\alpha}}+4K\varepsilon+\rho_{1}(\varepsilon,n^{-1}),
\]
for all $n\in \mathbb{N}$, $\varepsilon \in(0,1)$, where $\rho_{1}%
(\varepsilon,h)$ and $\rho_{2}(\varepsilon,h)$ are given in Theorem
\ref{main theorem}.
\end{theorem}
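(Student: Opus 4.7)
The plan is to obtain the statement as an immediate corollary of Theorem \ref{main theorem} by specializing to the mesh size $h = 1/n$ and evaluating at the point $(t,x) = (1,0)$, so that both $u_h(1,0)$ and $u(1,0)$ collapse to the discrete and limiting objects of the central limit theorem.

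The first step is to identify $u_h(1,0)$ with $\mathbb{\hat{E}}[\phi(S_n/\sqrt[\alpha]{n})]$ when $h = 1/n$. This has already been observed in the discussion following (\ref{2.2}): a short induction on $k$ using the recursion together with the i.i.d.\ property of $\{Z_i\}$ from Definition 2.1 yields
\[
u_h(kh, x) = \mathbb{\hat{E}}\Big[\phi\Big(x + h^{1/\alpha} \sum_{i=1}^k Z_i\Big)\Big],
\]
and the case $k = n$, $x = 0$ gives what is needed. In parallel, Theorem \ref{stable limit theorem}(i) gives $u(1,0) = \mathbb{\tilde{E}}[\phi(\tilde\zeta_1)]$, so that
\[
\mathbb{\hat{E}}\Big[\phi\Big(\tfrac{S_n}{\sqrt[\alpha]{n}}\Big)\Big] - \mathbb{\tilde{E}}[\phi(\tilde\zeta_1)] = u_h(1,0) - u(1,0).
\]

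Next, I would apply the two halves of Theorem \ref{main theorem} at $(t,x) = (1,0)$ with $h = 1/n$, so that $h^{\delta/\alpha}$ becomes $n^{-\delta/\alpha}$ and $\rho_j(\varepsilon, h)$ becomes $\rho_j(\varepsilon, n^{-1})$. Part (i) immediately yields the upper bound on $u_h(1,0) - u(1,0)$. Part (ii), which reads $u - u_h \leq 2 K h^{\delta/\alpha} + 4K\varepsilon + \rho_2(\varepsilon, h)$, rearranges into $u_h - u \geq -(2Kh^{\delta/\alpha} + 4K\varepsilon + \rho_2(\varepsilon, h))$, producing the matching lower bound. (The statement as typeset appears to be missing a minus sign on the lower estimate, since otherwise the left and right inequalities would be mutually inconsistent as $\varepsilon \to 0$ and $n \to \infty$.)

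I do not anticipate any genuine obstacle: all the substantive work, namely the Krylov-style mollification of both $u$ and $u_h$, the discrete comparison Lemma \ref{comparison}, the consistency estimate Theorem \ref{consistency}, and the regularity bounds in Theorems \ref{stable limit theorem}(ii) and \ref{uh_regularity}, is already packaged inside Theorem \ref{main theorem}. The only bookkeeping point worth flagging is that the constant $K$ appearing in the statement simultaneously governs the regularity of $u$ and of $u_h$; this is arranged by taking $K := C_\phi^\delta (2\|\phi\|_\infty)^{1-\delta} M_\delta \vee C_\phi$, which already controls both (\ref{u_regularity}) and (\ref{uh_regularity 2}), so no rescaling is needed before invoking Theorem \ref{main theorem}.
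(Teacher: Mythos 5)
Your proposal is correct and matches the paper's (implicit) argument exactly: the paper states this result as an immediate consequence of Theorem \ref{main theorem} obtained by taking $h=1/n$ and evaluating at $(t,x)=(1,0)$, using $u_h(1,0)=\mathbb{\hat{E}}[\phi(S_n/\sqrt[\alpha]{n})]$ and $u(1,0)=\mathbb{\tilde{E}}[\phi(\tilde{\zeta}_1)]$. Your observation that the lower estimate should carry a minus sign (i.e.\ read $-(2Kn^{-\delta/\alpha}+4K\varepsilon+\rho_2(\varepsilon,n^{-1}))$) is also right; as typeset the two inequalities would be mutually inconsistent, so this is a typo in the statement rather than a gap in your argument.
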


%\begin{proof}
%By induction, we can derive that for all $n\in \mathbb{N}$ such that $nh\leq1$
%and $x\in \mathbb{R}^{d}$
%\[
%u_{h}(nh,x)=\mathbb{\tilde{E}}\Big[\phi \Big(x+\sum_{i=1}^{n}h^{\frac{1}%
%{\alpha}}Z_{i}\Big)\Big].
%\]
%In particular, taking $h=\frac{1}{n}$, we have%
%\[
%u_{h}(1,0)=\mathbb{\hat{E}}\left[  \phi \left(  \frac{S_{n}}{\sqrt[\alpha]{n}%
%}\right)  \right]  ,
%\]
%which$\mathbb{\ }$
%\end{proof}

\section{Examples\label{Sec ex}}

In this section, we shall apply our results to study the convergence rate of
several examples that have been considered in \cite{BM2016,HJL2021,HJLP2022,JL2023},
but with new or improved convergence rates. Let us start with some
notations and assumptions. Let $M=d=1$ and $F_{\mu}$ be the $\alpha$-stable
L\'{e}vy measure in (\ref{L_0}) with $\mu$ concentrated on $S=\{-1,1\}$, that
is,
\[
F_{\mu}(d\lambda)=\frac{k_{1}}{|\lambda|^{\alpha+1}}\mathbbm{1}_{(-\infty
,0)}(\lambda)d\lambda+\frac{k_{2}}{|\lambda|^{\alpha+1}}\mathbbm{1}_{(0,\infty
)}(\lambda)d\lambda,
\]
where $k_{1}=\mu \{-1\}$ and $k_{2}=\mu \{1\}$. Denote $k:=(k_{1},k_{2})$ and
$\Lambda:=(\underline{\Lambda},\overline{\Lambda})^{2}$. For each $k\in
\Lambda$, let $W_{k}$ be a classical random variable satisfying the cumulative
distribution function
\[
F_{W_{k}}(x)=\left \{
\begin{array}
[c]{ll}%
\displaystyle \left[  k_{1}/\alpha+\beta_{1,k}(x)\right]  \frac{1}%
{|x|^{\alpha}}, & x<0,\\
\displaystyle1-\left[  k_{2}/\alpha+\beta_{2,k}(x)\right]  \frac{1}{x^{\alpha
}}, & x>0,
\end{array}
\right.
\]
with some continuously differentiable functions $\beta_{1,k}:$ $(-\infty,0]$
$\rightarrow \mathbb{R}$ and $\beta_{2,k}:[0,\infty)\rightarrow \mathbb{R}$
satisfying
\[
\lim_{x\rightarrow-\infty}\beta_{1,k}(x)=\lim_{x\rightarrow \infty}\beta
_{2,k}(x)=0.
\]

More precisely, for $\alpha \in(1,2)$, we further assume that

\begin{description}
\item[(C1)] $W_{k}$ has mean zero.

\item[(C2)] there exist $q_{0}>0$ and $C_{\beta}>0$, such that the following
terms are less than $C_{\beta}n^{-q_{0}}$ for all $n\geq1$
\[%
\begin{array}
[c]{lll}%
\displaystyle|\beta_{1,k}(-n^{1/\alpha})|,\text{ \  \ } & \displaystyle \int
_{-\infty}^{-1}\frac{|\beta_{1,k}(n^{1/\alpha}x)|}{|x|^{\alpha}}dx,\text{ \ }
& \displaystyle \int_{-1}^{0}\frac{|\beta_{1,k}(n^{1/\alpha}x)|}%
{|x|^{\alpha-1}}dx,\\
&  & \\
\displaystyle|\beta_{2,k}(n^{1/\alpha})|,\text{ \ } & \displaystyle \int
_{1}^{\infty}\frac{|\beta_{2,k}(n^{1/\alpha}x)|}{x^{\alpha}}dx,\text{ \ } &
\displaystyle \int_{0}^{1}\frac{|\beta_{2,k}(n^{1/\alpha}x)|}{x^{\alpha-1}}dx.
\end{array}
\]

\end{description}

For $\alpha \in(0,1)$, we assume that

\begin{description}
\item[(D1)] $k_{1}=k_{2}$, and $\beta_{1,k}(x)=$ $\beta_{2,k}(-x)$ for any
$x\leq0$.

\item[(D2)] there exist $q_{0}>0$ and $C_{\beta}>0$, such that the following
terms are less than $C_{\beta}n^{-q_{0}}$ for all $n\geq1$%
\[%
\begin{array}
[c]{lll}%
\displaystyle|\beta_{2,k}(n^{1/\alpha})|,\text{ \ } & \displaystyle \int
_{1}^{\infty}\frac{|\beta_{2,k}(n^{1/\alpha}x)|}{x^{1+\alpha-\delta}}dx,\text{
\ } & \displaystyle \int_{0}^{1}\frac{|\beta_{2,k}(n^{1/\alpha}x)|}{x^{\alpha
}}dx.
\end{array}
\]

\end{description}

For $\alpha=1$, we assume that

\begin{description}
\item[(E1)] $k_{1}=k_{2}$, and $\beta_{1,k}(x)=$ $\beta_{2,k}(-x)$ for any
$x\leq0$.

\item[(E2)] there exist $q_{0}>0$ and $C_{\beta}>0$, such that the following
terms are less than $C_{\beta}n^{-q_{0}}$ for all $n\geq1$%
\[%
\begin{array}
[c]{lll}%
\displaystyle|\beta_{2,k}(n)|,\text{ \ } & \displaystyle \int_{1}^{\infty
}\frac{|\beta_{2,k}(nx)|}{x^{2-\delta}}dx,\text{ \ } & \displaystyle \int
_{0}^{1}|\beta_{2,k}(nx)|dx.
\end{array}
\]

\end{description}

Let $\Omega=\mathbb{R}$ and $\mathcal{H}_{0}$ be the space of bounded and
continuous functions on $\mathbb{R}$. For each $X=f(x)\in \mathcal{H}_{0}$,
define the sublinear expectation of $X$ by
\[
\mathbb{\hat{E}}[X]=\sup_{k\in \Lambda}\int_{\mathbb{R}}f(x)dF_{W_{k}}(x).
\]
We denote by $\mathcal{H}$\ the completion of $\mathcal{H}_{0}$ under the
norm$\  \left \Vert X\right \Vert :=\mathbb{\hat{E}}[|X|]$ and still denote the
extended sublinear expectation as$\  \mathbb{\hat{E}}$. Let
\[
Z(z)=z\text{, \ }z\in \mathbb{R},
\]
be a random variable, and $\{Z_{i}\}_{i=1}^{\infty}$ be a sequence of i.i.d.
$\mathbb{R}$-valued random variables in the sense that $Z_{1}\overset{d}{=}Z$,
$Z_{i+1}\overset{d}{=}Z_{i}$, and $Z_{i+1}\perp \! \! \! \perp(Z_{1}%
,Z_{2},\ldots,Z_{i})$ for every $i\in \mathbb{N}$.

We present a concrete example to illustrate the assumptions above.

\begin{example}
\label{example}Consider a sequence of random variables $\{Z_{i}\}_{i=1}%
^{\infty}$ defined on the sublinear expectation space $(\Omega,\mathcal{H}%
,\mathbb{\hat{E}})$ above with
\[
F_{W_{k}}(x)=\left \{
\begin{array}
[c]{ll}%
\displaystyle \left[  k_{1}/\alpha+a_{1}|x|^{\alpha-\beta}\right]  \frac
{1}{|x|^{\alpha}}, & x\leq-1,\\
\displaystyle1-\left[  k_{2}/\alpha+a_{2}x^{\alpha-\beta}\right]  \frac
{1}{x^{\alpha}}, & x\geq1,
\end{array}
\right.
\]
with $\beta>\alpha$ and some constants $a_{1},a_{2}>0$. Here we do not specify
$\beta_{1,k}(x)$ and $\beta_{2,k}(x)$ for $0<|x|<1$, but we require that
$\beta_{1,k}(x)$ and $\beta_{2,k}(x)$ satisfy Assumptions (C1), (D1), and (E1)
with respect to different $\alpha$. Then it can be checked that

1. When $\alpha \in(1,2)$, we have
\[
\int_{1}^{\infty}\frac{|\beta_{2,k}(n^{1/\alpha}x)|}{x^{\alpha}}dx=\frac
{a_{2}}{\beta-1}n^{-\frac{\beta-\alpha}{\alpha}}\leq O(n^{-q_{0}})
\]
and
\[
\int_{0}^{1}\frac{|\beta_{2,k}(n^{1/\alpha}x)|}{x^{\alpha-1}}dx\leq
b_{0}n^{-\frac{2-\alpha}{\alpha}}+a_{2}n^{\frac{\alpha-\beta}{\alpha}}%
\int_{n^{-1/\alpha}}^{1}x^{1-\beta}dx\leq O(n^{-q_{0}}),
\]
where $b_{0}:=\sup \limits_{x\in \lbrack0,1]}|\beta_{2,k}(n^{1/\alpha}x)|$, $\epsilon_{0}>0$ is sufficiently small, and
\[
q_{0}:=\left \{
\begin{array}
[c]{ll}%
\min \left \{  \frac{\beta-\alpha}{\alpha},\frac{2-\alpha}{\alpha}\right \}  , &
\beta \neq2,\beta>\alpha,\\
\frac{2-\alpha}{\alpha}-\epsilon_{0}, & \beta=2.
\end{array}
\right.
\]

2. When $\alpha \in(0,1)$, we have%
\[
\int_{1}^{\infty}\frac{|\beta_{2,k}(n^{1/\alpha}x)|}{x^{1+\alpha-\delta}%
}dx=\frac{a_{2}}{\beta-\delta}n^{-\frac{\beta-\alpha}{\alpha}}\leq
O(n^{-q_{0}})
\]
and
\[
\int_{0}^{1}\frac{|\beta_{2,k}(n^{1/\alpha}x)|}{x^{\alpha}}dx\leq
b_{0}n^{-\frac{1-\alpha}{\alpha}}+a_{2}n^{\frac{\alpha-\beta}{\alpha}}%
\int_{n^{-1/\alpha}}^{1}x^{-\beta}dx\leq O(n^{-q_{0}}),
\]
where $\epsilon_{0}>0$ is sufficiently small, and
\[
q_{0}:=\left \{
\begin{array}
[c]{ll}%
\min \left \{  \frac{\beta-\alpha}{\alpha},\frac{1-\alpha}{\alpha}\right \}  , &
\beta \neq1,\beta>\alpha,\\
\frac{1-\alpha}{\alpha}-\epsilon_{0}, & \beta=1.
\end{array}
\right.
\]

3. When $\alpha=1$, we have%
\[
\int_{1}^{\infty}\frac{|\beta_{2,k}(nx)|}{x^{2-\delta}}dx=\frac{a_{2}}%
{\beta-\delta}n^{-(\beta-1)}\leq O(n^{-q_{0}})
\]
and
\[
\int_{0}^{1}|\beta_{2,k}(nx)|dx\leq b_{0}n^{-1}+a_{2}n^{1-\beta}\int_{n^{-1}%
}^{1}x^{1-\beta}dx\leq O(n^{-q_{0}}),
\]
where $\epsilon_{0}>0$ is sufficiently small, and
\[
q_{0}:=\left \{
\begin{array}
[c]{ll}%
\min \left \{  \beta-1,1\right \}  , & \beta \neq2,\beta>1,\\
1-\epsilon_{0}, & \beta=2.
\end{array}
\right.
\]

\end{example}

From \cite{HJLP2022,JL2023}, we have known that the sequence $\{Z_{i}%
\}_{i=1}^{\infty}$ satisfies Assumption \ref{assump1}. In order to further
derive the optimal convergence rate, we will focus on the consistency
condition (Assumption \ref{assump1} (ii)) in the case of $\alpha \in(1,2)$,
$\alpha=1$, and $\alpha \in(0,1)$, respectively. We now state our convergence
rate result.

\begin{theorem}
\label{main theorem ex1 CLT}Suppose that the above assumptions hold. Then
\[
\left \vert \mathbb{\hat{E}}\left[  \phi \left(  \frac{S_{n}}{\sqrt[\alpha]{n}%
}\right)  \right]  -\mathbb{\tilde{E}}[\phi(\tilde{\zeta}_{1})]\right \vert
=Cn^{-\Gamma(\alpha,\delta,q_{0})}\text{ \ with }\Gamma(\alpha,\delta
,q_{0}):=\left \{
\begin{array}
[c]{ll}%
\min \left \{  \frac{2-\alpha}{2\alpha},\frac{q_{0}}{2}\right \}  , & \alpha
\in(1,2),\\
\min \left \{  \frac{\delta}{2},\frac{\delta^{2}}{1+\delta^{2}},\frac{q_{0}}%
{2}\right \}  , & \alpha=1,\\
\min \left \{  \frac{\delta}{2\alpha},\frac{\delta^{2}}{\alpha(\alpha+\delta
^{2})},\frac{1-\alpha}{\alpha},\alpha q_{0}\right \}  , & \alpha \in(0,1),
\end{array}
\right.
\]
where $\Gamma(\alpha,\delta,q_{0})$ is a positive constant depending
on\ $\alpha$, $\delta$, and $q_{0}$, and$\ C$ is a constant depending on $K$,
$M_{\delta}$, $\left \Vert \phi \right \Vert _{\infty}$, and $C_{\beta}$.
\end{theorem}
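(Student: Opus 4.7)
The plan is to invoke Theorem \ref{main theorem CLT}, which reduces the problem to controlling the two auxiliary quantities $\rho_1(\varepsilon,n^{-1})$ and $\rho_2(\varepsilon,n^{-1})$, and then to choose $\varepsilon=\varepsilon(n)$ optimally in each of the three regimes $\alpha\in(1,2)$, $\alpha=1$, and $\alpha\in(0,1)$. Since both $\rho_1$ and $\rho_2$ contain a term of the form $l_{\varphi}(h)$ with $\varphi$ the mollification of $u$ (resp.\ $u_h$), the real work is to produce a quantitative version of the consistency condition in Assumption \ref{assump1}(ii) for the specific random variables $Z_i$ built in Example \ref{example}, and to keep track of how the $\varepsilon$-dependent bounds on $\partial_t^l D_x^k u^\varepsilon$ from \eqref{u_mollifer} and \eqref{uh_mollifer} propagate.

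First I would derive, for any test function $\varphi\in C_b^3(\mathbb{R})$, an explicit estimate of the form
\[
l_{\varphi}(h)\le C\Big(\|\varphi\|_\infty\vee\|D\varphi\|_\infty\vee\|D^2\varphi\|_\infty\vee\|D^3\varphi\|_\infty\Big)\,h^{\sigma(\alpha,\delta,q_0)},
\]
by expanding $\hat{\mathbb{E}}[\varphi(x+h^{1/\alpha}Z_1)-\varphi(x)]$ against the law of $W_k$ and comparing with $h\sup_{F_\mu\in\mathcal{L}}\int\delta_\lambda^\alpha\varphi(x)F_\mu(d\lambda)$. This comparison splits into a ``bulk'' piece matching the $\alpha$-stable tail $k_i/|\lambda|^{1+\alpha}$ and a ``correction'' piece governed by $\beta_{i,k}$; conditions (C2)/(D2)/(E2) supply the bound $O(n^{-q_0})$ on the correction, while the bulk piece is handled by Taylor expansion (for $\alpha\in(1,2)$ using $D^2\varphi$; for $\alpha\in(0,1)$ using $D\varphi$ with the truncation at $|\lambda|\le 1$ removed; for $\alpha=1$ using both and treating the symmetric part via (E1)). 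The symmetry assumptions (D1) and (E1) are what allow the problematic large-jump parts to cancel in the non-integrable and critical cases.

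Next I plug the resulting bound into $l_{u^\varepsilon}(h)$ and $l_{u_h^\varepsilon}(h)$ using $\|\partial_t^lD_x^k u^\varepsilon\|_\infty\le 2K\varepsilon^{1-l\alpha/\delta-k}$, which introduces explicit negative powers of $\varepsilon$. Inserting everything into the definitions of $\rho_1,\rho_2$ in \eqref{rho1}--\eqref{rho2} (with $h=1/n$) yields, in each regime, an upper bound of the form
\[
Kn^{-\delta/\alpha}+4K\varepsilon+A_1\,\varepsilon^{-a_1}n^{-b_1}+A_2\,\varepsilon^{-a_2}n^{-b_2}+A_3\,\varepsilon^{-a_3}n^{-q_0},
\]
with specific exponents $(a_i,b_i)$ read off from the formulas. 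Taking $\varepsilon=n^{-r}$ and balancing the dominant two or three terms produces the optimal exponent $\Gamma(\alpha,\delta,q_0)$; e.g.\ for $\alpha\in(1,2)$ (where $\delta=1$) balancing $\varepsilon$ against $\varepsilon^{-\alpha}n^{-1/\alpha}$ gives $\varepsilon\sim n^{-1/(\alpha(\alpha+1))}$ etc., and the minimum in the statement records which term eventually wins, including the case where the tail-consistency rate $q_0$ dominates.

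The main obstacle I expect is the quantitative consistency step in the critical regime $\alpha=1$ and in the non-integrable regime $\alpha\in(0,1)$: the indicator $\mathbbm{1}_{\{|\lambda|\le1\}}$ in $\delta_\lambda^\alpha$ forces a careful splitting of the tail integral, and without the moment control available for $\alpha>1$ one has to exploit the symmetry (D1)/(E1) together with an integration against $|D^2\varphi|$ on $|\lambda|\le1$ and $|\varphi|$ on $|\lambda|>1$. Once this is done cleanly the remaining optimization is mechanical, and the final constant $C$ collects $K$, $M_\delta$, $\|\phi\|_\infty$ and $C_\beta$ as claimed.
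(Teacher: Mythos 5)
Your proposal follows essentially the same route as the paper: derive a quantitative consistency estimate $l_{\varphi}(s)\le C(\|D\varphi\|_\infty,\|D^2\varphi\|_\infty,\ldots)\,s^{\sigma}$ for the example by integration by parts against $dF_{W_k}$, splitting the integral at $s^{1/\alpha}$ and $1$ and invoking (C2)/(D2)/(E2) for the correction terms and (D1)/(E1) for the symmetry cancellations, then feed this into the general error bounds of Theorem \ref{main theorem}/\ref{main theorem CLT} via \eqref{u_mollifer}--\eqref{uh_mollifer} and optimize $\varepsilon=h^{\gamma}$. One small caveat: for $\alpha\in(1,2)$ the illustrative balance you cite ($\varepsilon$ against $\varepsilon^{-\alpha}n^{-1/\alpha}$, giving $n^{-1/(\alpha(\alpha+1))}$) is not the binding constraint — the rate $(2-\alpha)/(2\alpha)$ comes from balancing $\varepsilon$ against the consistency term $\varepsilon^{-1}h^{(2-\alpha)/\alpha}$ — but since you correctly state that the minimum is taken over all competing exponents, this does not affect the validity of the plan.
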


\begin{remark}
In the case $\alpha \in(1,2)$, our results have improved the convergence rate
$\min \left \{  \frac{1}{4},\frac{2-\alpha}{2\alpha},\frac{q_{0}}{2}\right \} $
in \cite{HJL2021}.
\end{remark}

\begin{remark}
Applying Theorem \ref{main theorem ex1 CLT} to Example \ref{example}, we have
when $\alpha \in(1,2)$,%
\[
\Gamma(\alpha,\delta,\beta)=\left \{
\begin{array}
[c]{ll}%
\min \left \{  \frac{\beta-\alpha}{2\alpha},\frac{2-\alpha}{2\alpha}\right \}
, & \beta \neq2,\beta>\alpha,\\
\frac{2-\alpha}{2\alpha}-\epsilon_{0}, & \beta=2,
\end{array}
\right.
\]
when $\alpha \in(0,1)$, we have
\[
\Gamma(\alpha,\delta,\beta)=\left \{
\begin{array}
[c]{ll}%
\min \left \{  \frac{\delta}{2\alpha},\frac{\delta^{2}}{\alpha(\alpha+\delta
^{2})},\beta-\alpha,1-\alpha \right \}  , & \beta \neq1,\beta>\alpha,\\
\min \left \{  \frac{\delta}{2\alpha},\frac{\delta^{2}}{\alpha(\alpha+\delta
^{2})},1-\alpha-\epsilon_{0}\right \}  , & \beta=1,
\end{array}
\right.
\]
and when $\alpha=1$, we have%
\[
\Gamma(\alpha,\delta,\beta)=\left \{
\begin{array}
[c]{ll}%
\min \left \{  \frac{\delta}{2},\frac{\delta^{2}}{1+\delta^{2}},\frac{\beta
-1}{2}\right \}  , & \beta \neq2,\beta>1,\\
\min \left \{  \frac{\delta}{2},\frac{\delta^{2}}{1+\delta^{2}},\frac{1}%
{2}-\epsilon_{0}\right \}  , & \beta=2,
\end{array}
\right.
\]
with any small $\epsilon_{0}>0$.
\end{remark}

\subsection{The case of $\alpha \in(1,2)$}

For $\varphi \in C_{b}^{3}(\mathbb{R})$ and $(s,x)\in(0,1]\times \mathbb{R}$,
using a change of variables and the assumption (C1), we can deduce that
\begin{align*}
&  \frac{1}{s}\bigg \vert \mathbb{\hat{E}}\big[\varphi(x+s^{\frac{1}{\alpha}%
}Z_{1})-\varphi(x)\big]-s\sup \limits_{F_{\mu}\in \mathcal{L}}\int_{\mathbb{R}%
}\delta_{\lambda}^{\alpha}\varphi(x)F_{\mu}(d\lambda)\bigg \vert \\
&  \leq \sup_{k\in \Lambda}\bigg \vert \int_{\mathbb{-\infty}}^{0}%
\delta_{\lambda}^{\alpha}\varphi(x)[\alpha \beta_{1,k}(s^{-\frac{1}{\alpha}%
}\lambda)-\beta_{1,k}^{\prime}(s^{-\frac{1}{\alpha}}\lambda)s^{-\frac
{1}{\alpha}}\lambda]|\lambda|^{-\alpha-1}d\lambda \\
&  +\int_{0}^{\infty}\delta_{\lambda}^{\alpha}\varphi(x)[\alpha \beta
_{2,k}(s^{-\frac{1}{\alpha}}\lambda)-\beta_{2,k}^{\prime}(s^{-\frac{1}{\alpha
}}\lambda)s^{-\frac{1}{\alpha}}\lambda]\lambda^{-\alpha-1}d\lambda \bigg \vert,
\end{align*}
for all $x\in \mathbb{R}$. In the following, we consider the integral above
along the positive half-line, and the\ integral along the negative half-line
can be similarly obtained. For any given $k\in \Lambda$, we denote%
\begin{align*}
&  \left \vert \int_{0}^{\infty}\delta_{\lambda}^{\alpha}\varphi(x)[\alpha
\beta_{2,k}(s^{-\frac{1}{\alpha}}\lambda)-\beta_{2,k}^{\prime}(s^{-\frac
{1}{\alpha}}\lambda)s^{-\frac{1}{\alpha}}\lambda]\lambda^{-\alpha-1}%
d\lambda \right \vert \\
&  \leq \left \vert \int_{1}^{\infty}\delta_{\lambda}^{\alpha}\varphi
(x)[\alpha \beta_{2,k}(s^{-\frac{1}{\alpha}}\lambda)-\beta_{2,k}^{\prime
}(s^{-\frac{1}{\alpha}}\lambda)s^{-\frac{1}{\alpha}}\lambda]\lambda
^{-\alpha-1}d\lambda \right \vert \\
&  \text{\  \  \ }+\left \vert \int_{s^{\frac{1}{\alpha}}}^{1}\delta_{\lambda
}^{\alpha}\varphi(x)[\alpha \beta_{2,k}(s^{-\frac{1}{\alpha}}\lambda
)-\beta_{2,k}^{\prime}(s^{-\frac{1}{\alpha}}\lambda)s^{-\frac{1}{\alpha}%
}\lambda]\lambda^{-\alpha-1}d\lambda \right \vert \\
&  \text{ \  \ }+\bigg \vert \int_{0}^{s^{\frac{1}{\alpha}}}\delta_{\lambda
}^{\alpha}\varphi(x)[\alpha \beta_{2,k}(s^{-\frac{1}{\alpha}}\lambda
)-\beta_{2,k}^{\prime}(s^{-\frac{1}{\alpha}}\lambda)s^{-\frac{1}{\alpha}%
}\lambda]\lambda^{-\alpha-1}d\lambda \bigg \vert \\
&  :=I_{1}+I_{2}+I_{3}.
\end{align*}
Note that $\frac{\partial}{\partial \lambda}\delta_{\lambda}^{\alpha}%
\varphi(x)=D\varphi(x+\lambda)-D\varphi(x)$ and
\[
\frac{\partial}{\partial \lambda}\big(-\beta_{2,k}(s^{-\frac{1}{\alpha}}%
\lambda)\lambda^{-\alpha}\big)=\big(\alpha \beta_{2,k}(s^{-\frac{1}{\alpha}%
}\lambda)-\beta_{2,k}^{\prime}(s^{-\frac{1}{\alpha}}\lambda)s^{-\frac
{1}{\alpha}}\lambda \big)\lambda^{-\alpha-1}.
\]
Using integration by parts, we derive that
\begin{align*}
I_{1}  &  =\bigg \vert \delta_{1}^{\alpha}\varphi(x)\beta_{2,k}(s^{-\frac
{1}{\alpha}})+\int_{1}^{\infty}\beta_{2,k}(s^{-\frac{1}{\alpha}}%
\lambda)(D\varphi(x+\lambda)-D\varphi(x))\lambda^{-\alpha}d\lambda
\bigg \vert \\
&  \leq2\left \Vert D\varphi \right \Vert _{\infty}\bigg (|\beta_{2,k}%
(s^{-\frac{1}{\alpha}})|+\int_{1}^{\infty}|\beta_{2,k}(s^{-\frac{1}{\alpha}%
}\lambda)|\lambda^{-\alpha}d\lambda \bigg),
\end{align*}
where we have used the fact that
\[
\delta_{1}^{\alpha}\varphi(x)=\int_{0}^{1}(D\varphi(x+\theta)-D\varphi
(x))d\theta \leq2\left \Vert D\varphi \right \Vert _{\infty}.
\]
Also, by means of integration by parts, it follows that
\begin{align*}
I_{2}  &  =\bigg \vert \delta_{s^{1/\alpha}}^{\alpha}\varphi(x)\beta
_{2,k}(1)s^{-1}-\delta_{1}^{\alpha}\varphi(x)\beta_{2,k}(s^{-\frac{1}{\alpha}%
})\\
\text{ }  &  \  \  \ +\int_{s^{\frac{1}{\alpha}}}^{1}\beta_{2,k}(s^{-\frac
{1}{\alpha}}\lambda)(D\varphi(x+\lambda)-D\varphi(x))\lambda^{-\alpha}%
d\lambda \bigg \vert \\
&  \leq \frac{1}{2}\left \Vert D^{2}\varphi \right \Vert _{\infty}|\beta
_{2,k}(1)|s^{\frac{2-\alpha}{\alpha}}+2\left \Vert D\varphi \right \Vert
_{\infty}|\beta_{2,k}(s^{-\frac{1}{\alpha}})|\\
&  \text{ \  \ }+\left \Vert D^{2}\varphi \right \Vert _{\infty}\int_{0}^{1}%
|\beta_{2,k}(s^{-\frac{1}{\alpha}}\lambda)|\lambda^{1-\alpha}d\lambda
\end{align*}
where we have used the fact that
\[
\delta_{s^{1/\alpha}}^{\alpha}\varphi(x)=\int_{0}^{1}\int_{0}^{1}D^{2}%
\varphi(z+\tau \theta s^{\frac{1}{\alpha}})s^{\frac{2}{\alpha}}\theta d\tau
d\theta \leq \frac{1}{2}\left \Vert D^{2}\varphi \right \Vert _{\infty}s^{\frac
{2}{\alpha}}.
\]
\qquad By changing variables, we obtain that
\begin{align*}
I_{3}  &  \leq \left \Vert D^{2}\varphi \right \Vert _{\infty}\int_{0}%
^{s^{\frac{1}{\alpha}}}\big \vert \alpha \beta_{2,k}(s^{-\frac{1}{\alpha}%
}\lambda)-\beta_{2,k}^{\prime}(s^{-\frac{1}{\alpha}}\lambda)s^{-\frac
{1}{\alpha}}\lambda \big \vert \lambda^{1-\alpha}d\lambda \\
&  =\left \Vert D^{2}\varphi \right \Vert _{\infty}s^{\frac{2-\alpha}{\alpha}%
}\int_{0}^{1}\big \vert \alpha \beta_{2,k}(\lambda)-\beta_{2,k}^{\prime
}(\lambda)\lambda \big \vert \lambda^{1-\alpha}d\lambda.
\end{align*}
Since for any $k\in \Lambda$%
\[
\beta_{2,k}(x)=(1-F_{W_{k}}(x))x^{\alpha}-\frac{k_{2}}{\alpha},\text{\  \ }%
x\geq0,
\]
it is straightforward to check that the following terms are uniformly bounded
(also denote as $C_{\beta}$)
\[%
\begin{array}
[c]{lll}%
\displaystyle|\beta_{2,k}(1)|, &  & \displaystyle \int_{0}^{1}\frac
{|\alpha \beta_{2,k}(\lambda)-\beta_{2,k}^{\prime}(\lambda)\lambda|}%
{\lambda^{\alpha-1}}d\lambda.
\end{array}
\]
Thus, we have%
\begin{align*}
&  \frac{1}{s}\bigg \vert \mathbb{\hat{E}}\big[\varphi(x+s^{\frac{1}{\alpha}%
}Z_{1})-\varphi(x)\big]-s\sup \limits_{F_{\mu}\in \mathcal{L}}\int_{\mathbb{R}%
}\delta_{\lambda}^{\alpha}\varphi(x)F_{\mu}(d\lambda)\bigg \vert \\
&  \leq4\left \Vert D\varphi \right \Vert _{\infty}\sup_{k\in \Lambda
}\bigg \{|\beta_{1,k}(-s^{-\frac{1}{\alpha}})|+|\beta_{2,k}(s^{-\frac
{1}{\alpha}})|\\
&  \text{ \  \ }+\int_{1}^{\infty}\left[  |\beta_{1,k}(-s^{-\frac{1}{\alpha}%
}\lambda)|+|\beta_{2,k}(s^{-\frac{1}{\alpha}}\lambda)|\right]  \lambda
^{-\alpha}d\lambda \bigg \} \\
&  \text{ \  \ }+\left \Vert D^{2}\varphi \right \Vert _{\infty}\sup_{k\in \Lambda
}\int_{0}^{1}\left[  |\beta_{1,k}(-s^{-\frac{1}{\alpha}}\lambda)|+|\beta
_{2,k}(s^{-\frac{1}{\alpha}}\lambda)|\right]  \lambda^{1-\alpha}d\lambda \\
&  \text{ \  \ }+\left \Vert D^{2}\varphi \right \Vert _{\infty}s^{\frac{2-\alpha
}{\alpha}}\sup_{k\in \Lambda}\bigg \{|\beta_{1,k}(-1)|+|\beta_{2,k}(1)|\\
&  \text{ \  \ }+\int_{0}^{1}\left[  |\alpha \beta_{1,k}(-\lambda)+\beta
_{1,k}^{\prime}(-\lambda)\lambda|+|\alpha \beta_{2,k}(\lambda)-\beta
_{2,k}^{\prime}(\lambda)\lambda|\right]  \lambda^{1-\alpha}d\lambda \bigg \} \\
&  \leq(16C_{\beta}\left \Vert D\varphi \right \Vert _{\infty}+2C_{\beta
}\left \Vert D^{2}\varphi \right \Vert _{\infty})s^{q_{0}}+4C_{\beta}\left \Vert
D^{2}\varphi \right \Vert _{\infty}s^{\frac{2-\alpha}{\alpha}},
\end{align*}
for all $s\in(0,1]$ and $x\in \mathbb{R}$. Using the condition (C2), we
conclude that%
\begin{equation}
l_{\varphi}(s)=(16C_{\beta}\left \Vert D\varphi \right \Vert _{\infty}+2C_{\beta
}\left \Vert D^{2}\varphi \right \Vert _{\infty})s^{q_{0}}+4C_{\beta}\left \Vert
D^{2}\varphi \right \Vert _{\infty}s^{\frac{2-\alpha}{\alpha}}. \label{ex1}%
\end{equation}

\subsection{The case of $\alpha \in(0,1)$}

For $\varphi \in C_{b}^{3}(\mathbb{R})$ and $(s,x)\in(0,1]\times \mathbb{R}$, by
a change of variables, we can also derive that
\begin{align*}
&  \frac{1}{s}\bigg \vert \mathbb{\hat{E}}\big[\varphi(x+s^{\frac{1}{\alpha}%
}Z_{1})-\varphi(x)\big]-s\sup \limits_{F_{\mu}\in \mathcal{L}}\int_{\mathbb{R}%
}\delta_{\lambda}^{\alpha}\varphi(x)F_{\mu}(d\lambda)\bigg \vert \\
&  \leq \sup_{k\in \Lambda}\bigg \vert \int_{\mathbb{-\infty}}^{0}%
\delta_{\lambda}^{\alpha}\varphi(x)\left[  \alpha \beta_{2,k}(-s^{-\frac
{1}{\alpha}}\lambda)-\beta_{2,k}^{\prime}(-s^{-\frac{1}{\alpha}}%
\lambda)s^{-\frac{1}{\alpha}}\lambda \right]  |\lambda|^{-\alpha-1}d\lambda \\
&  \text{ \  \ }+\int_{0}^{\infty}\delta_{\lambda}^{\alpha}\varphi(x)\left[
\alpha \beta_{2,k}(s^{-\frac{1}{\alpha}}\lambda)-\beta_{2,k}^{\prime}%
(s^{-\frac{1}{\alpha}}\lambda)s^{-\frac{1}{\alpha}}\lambda \right]
\lambda^{-\alpha-1}d\lambda \bigg \vert.
\end{align*}
For each $k\in \Lambda$, we set
\begin{align*}
&  \bigg \vert \int_{0}^{\infty}\delta_{\lambda}^{\alpha}\varphi(x)\left[
\alpha \beta_{2,k}(s^{-\frac{1}{\alpha}}\lambda)-\beta_{2,k}^{\prime}%
(s^{-\frac{1}{\alpha}}\lambda)s^{-\frac{1}{\alpha}}\lambda \right]
\lambda^{-\alpha-1}d\lambda \bigg \vert \\
&  =\bigg \vert \int_{1}^{\infty}\delta_{\lambda}^{\alpha}\varphi(x)\left[
\alpha \beta_{2,k}(s^{-\frac{1}{\alpha}}\lambda)-\beta_{2,k}^{\prime}%
(s^{-\frac{1}{\alpha}}\lambda)s^{-\frac{1}{\alpha}}\lambda \right]
\lambda^{-\alpha-1}d\lambda \bigg \vert \\
&  \text{ \  \ }+\bigg \vert \int_{s^{\frac{1}{\alpha}}}^{1}\delta_{\lambda
}^{\alpha}\varphi(x)\left[  \alpha \beta_{2,k}(s^{-\frac{1}{\alpha}}%
\lambda)-\beta_{2,k}^{\prime}(s^{-\frac{1}{\alpha}}\lambda)s^{-\frac{1}%
{\alpha}}\lambda \right]  \lambda^{-\alpha-1}d\lambda \bigg \vert \\
&  \text{ \  \ }+\bigg \vert \int_{0}^{s^{\frac{1}{\alpha}}}\delta_{\lambda
}^{\alpha}\varphi(x)\left[  \alpha \beta_{2,k}(s^{-\frac{1}{\alpha}}%
\lambda)-\beta_{2,k}^{\prime}(s^{-\frac{1}{\alpha}}\lambda)s^{-\frac{1}%
{\alpha}}\lambda \right]  \lambda^{-\alpha-1}d\lambda \bigg \vert \\
&  :=J_{1}+J_{2}+J_{3}\text{.}%
\end{align*}
For the part $J_{1}$, when $q_{0}\in(\frac{1-\alpha}{\alpha},\infty)$, in the
same way we can get%
\begin{align*}
J_{1}  &  =\bigg \vert \delta_{1}^{\alpha}\varphi(x)\beta_{2,k}(s^{-\frac
{1}{\alpha}})+\int_{1}^{\infty}D\varphi(x+\lambda)\beta_{2,k}(s^{-\frac
{1}{\alpha}}\lambda)\lambda^{-\alpha}d\lambda \bigg \vert \\
&  \leq2\left \Vert \varphi \right \Vert _{\infty}|\beta_{2,k}(s^{-\frac
{1}{\alpha}})|+C_{\beta}\left \Vert D\varphi \right \Vert _{\infty}\int
_{1}^{\infty}\frac{d\lambda}{\lambda^{\alpha(1+q_{0})}}s^{q_{0}}\\
&  \leq2\left \Vert \varphi \right \Vert _{\infty}|\beta_{2,k}(s^{-\frac
{1}{\alpha}})|+\frac{C_{\beta}}{\alpha(1+q_{0})-1}\left \Vert D\varphi
\right \Vert _{\infty}s^{q_{0}}\text{.}%
\end{align*}
When $q_{0}\in \lbrack0,\frac{1-\alpha}{\alpha}]$, from (D2), we choose some
$N_{0}>1$ such that $|\beta_{2,k}(x)|\leq C_{\beta}$, for $|x|\geq N_{0}$.
This leads to
\begin{align*}
&  \bigg \vert \int_{N_{0}}^{\infty}\delta_{\lambda}^{\alpha}\varphi
(x)\big[\alpha \beta_{2,k}(s^{-\frac{1}{\alpha}}\lambda)-\beta_{2,k}^{\prime
}(s^{-\frac{1}{\alpha}}\lambda)s^{-\frac{1}{\alpha}}\lambda \big]\lambda
^{-\alpha-1}d\lambda \bigg \vert \\
&  \leq2\left \Vert \varphi \right \Vert _{\infty}\int_{N_{0}}^{\infty
}\big \vert \alpha \beta_{2,k}(s^{-\frac{1}{\alpha}}\lambda)-\beta
_{2,k}^{\prime}(s^{-\frac{1}{\alpha}}\lambda)s^{-\frac{1}{\alpha}}%
\lambda \big \vert \lambda^{-\alpha-1}d\lambda \\
&  =2\left \Vert \varphi \right \Vert _{\infty}|\beta_{2,k}(s^{-\frac{1}{\alpha}%
}N_{0})|N_{0}^{-\alpha}\leq2C_{\beta}\left \Vert \varphi \right \Vert _{\infty
}N_{0}^{-\alpha},
\end{align*}
and
\begin{align*}
&  \bigg \vert \int_{1}^{N_{0}}\delta_{\lambda}^{\alpha}\varphi(x)\big[\alpha
\beta_{2,k}(s^{-\frac{1}{\alpha}}\lambda)-\beta_{2,k}^{\prime}(s^{-\frac
{1}{\alpha}}\lambda)s^{-\frac{1}{\alpha}}\lambda \big]\lambda^{-\alpha
-1}d\lambda \bigg \vert \\
&  =\bigg \vert \delta_{1}^{\alpha}\varphi(x)\beta_{2,k}(s^{-\frac{1}{\alpha}%
})-\delta_{N_{0}}^{\alpha}\varphi(x)\beta_{2,k}(s^{-\frac{1}{\alpha}}%
N_{0})N_{0}^{-\alpha}+\int_{1}^{N_{0}}D\varphi(x+\lambda)\beta_{2,k}%
(s^{-\frac{1}{\alpha}}\lambda)\lambda^{-\alpha}d\lambda \bigg \vert \\
&  \leq \left \Vert D\varphi \right \Vert _{\infty}|\beta_{2,k}(s^{-\frac
{1}{\alpha}})|+2C_{\beta}\left \Vert \varphi \right \Vert _{\infty}N_{0}%
^{-\alpha(1+q_{0})}+\frac{1}{1-\alpha}\left \Vert D\varphi \right \Vert _{\infty
}\sup_{|\lambda|\geq s^{-\frac{1}{\alpha}}}|\beta_{2,k}(\lambda)|N_{0}%
^{1-\alpha}\\
&  \leq \left \Vert D\varphi \right \Vert _{\infty}|\beta_{2,k}(s^{-\frac
{1}{\alpha}})|+2C_{\beta}\left \Vert \varphi \right \Vert _{\infty}N_{0}%
^{-\alpha}+\frac{C_{\beta}}{1-\alpha}\left \Vert D\varphi \right \Vert _{\infty
}s^{q_{0}}N_{0}^{1-\alpha}%
\end{align*}
By letting $N_{0}=s^{-q_{0}}$, we obtain that%
\[
J_{1}\leq \left \Vert D\varphi \right \Vert _{\infty}|\beta_{2,k}(s^{-\frac
{1}{\alpha}})|+C_{\beta}\left(  4\left \Vert \varphi \right \Vert _{\infty}%
+\frac{1}{1-\alpha}\left \Vert D\varphi \right \Vert _{\infty}\right)  s^{\alpha
q_{0}}\text{.}%
\]
For the part $J_{2}$, using integration by parts, we can deduce that
\begin{align*}
J_{2}  &  =\bigg \vert \delta_{s^{\frac{1}{\alpha}}}^{\alpha}\varphi
(x)\beta_{2,k}(1)s^{-1}-\delta_{1}^{\alpha}\varphi(x)\beta_{2,k}(s^{-\frac
{1}{\alpha}})+\int_{s^{\frac{1}{\alpha}}}^{1}D\varphi(x+\lambda)\beta
_{2,k}(s^{-\frac{1}{\alpha}}\lambda)\lambda^{-\alpha}d\lambda \bigg \vert \\
&  \leq \left \Vert D\varphi \right \Vert _{\infty}\left(  |\beta_{2,k}%
(1)|s^{\frac{1-\alpha}{\alpha}}+|\beta_{2,k}(s^{-\frac{1}{\alpha}})|+\int
_{0}^{1}|\beta_{2,k}(s^{-\frac{1}{\alpha}}\lambda)|\lambda^{-\alpha}%
d\lambda \right)  \text{.}%
\end{align*}
By changing variables, we get that
\begin{align*}
J_{3}  &  =\bigg \vert \int_{0}^{1}\delta_{s^{\frac{1}{\alpha}}\lambda
}^{\alpha}\varphi(x)\left[  \alpha \beta_{2,k}(\lambda)-\beta_{2,k}^{\prime
}(\lambda)\lambda \right]  s^{-1}\lambda^{-\alpha-1}d\lambda \bigg \vert \\
&  \leq \left \Vert D\varphi \right \Vert _{\infty}s^{\frac{1-\alpha}{\alpha}}%
\int_{0}^{1}\big \vert \alpha \beta_{2,k}(\lambda)-\beta_{2,k}^{\prime}%
(\lambda)\lambda \big \vert \lambda^{-\alpha}d\lambda.
\end{align*}
Note that the following terms are uniformly bounded (also denote as $C_{\beta
}$)
\[%
\begin{array}
[c]{lll}%
\displaystyle|\beta_{2,k}(1)|, &  & \displaystyle \int_{0}^{1}\frac
{|\alpha \beta_{2,k}(\lambda)-\beta_{2,k}^{\prime}(\lambda)\lambda|}%
{\lambda^{\alpha}}d\lambda.
\end{array}
\]
This indicates that%
\begin{align*}
&  \frac{1}{s}\bigg \vert \mathbb{\hat{E}}\big[\varphi(x+s^{\frac{1}{\alpha}%
}Z_{1})-\varphi(x)\big]-s\sup \limits_{F_{\mu}\in \mathcal{L}}\int_{\mathbb{R}%
}\delta_{\lambda}^{\alpha}\varphi(x)F_{\mu}(d\lambda)\bigg \vert \\
&  \leq4\left \Vert D\varphi \right \Vert _{\infty}\sup_{k\in \Lambda
}\bigg \{|\beta_{2,k}(s^{-\frac{1}{\alpha}})|+\int_{0}^{1}|\beta
_{2,k}(s^{-\frac{1}{\alpha}}\lambda)|\lambda^{-\alpha}d\lambda \bigg \} \\
&  \text{ \  \ }+2\left \Vert D\varphi \right \Vert _{\infty}s^{\frac{1-\alpha
}{\alpha}}\sup_{k\in \Lambda}\bigg \{|\beta_{2,k}(1)|+\int_{0}^{1}|\alpha
\beta_{2,k}(\lambda)-\beta_{2,k}^{\prime}(\lambda)\lambda|\lambda^{-\alpha
}d\lambda \bigg \} \\
&  \text{ \  \ }+C_{\beta}\left(  8\left \Vert \varphi \right \Vert _{\infty
}+\frac{2}{1-\alpha}\left \Vert D\varphi \right \Vert _{\infty}\right)  s^{\alpha
q_{0}},
\end{align*}
for all $(s,x)\in(0,1]\times \mathbb{R}$. From (D2), we conclude that%
\begin{equation}
l_{\varphi}(s)=8C_{\beta}\left \Vert D\varphi \right \Vert _{\infty}s^{q_{0}%
}+4C_{\beta}\left \Vert D\varphi \right \Vert _{\infty}s^{\frac{1-\alpha}{\alpha
}}+C_{\beta}\left(  8\left \Vert \varphi \right \Vert _{\infty}+\frac{2}%
{1-\alpha}\left \Vert D\varphi \right \Vert _{\infty}\right)  s^{\alpha q_{0}}.
\label{ex2}%
\end{equation}

\subsection{The case of $\alpha=1$}

For $\varphi \in C_{b}^{3}(\mathbb{R})$ and $(s,x)\in(0,1]\times \mathbb{R}$,
using a change of variables and the assumption (E1), it follows that
\begin{align*}
&  \frac{1}{s}\bigg \vert \mathbb{\hat{E}}\big[\varphi(x+sZ_{1})-\varphi
(x)\big]-s\sup \limits_{F_{\mu}\in \mathcal{L}}\int_{\mathbb{R}}\delta_{\lambda
}^{1}\varphi(x)F_{\mu}(d\lambda)\bigg \vert \\
&  \leq \sup_{k\in \Lambda}\bigg \vert \int_{\mathbb{-\infty}}^{0}%
\delta_{\lambda}^{1}\varphi(x)[\beta_{1,k}(s^{-1}\lambda)-\beta_{1,k}^{\prime
}(s^{-1}\lambda)s^{-1}\lambda]|\lambda|^{-2}d\lambda \\
&  \text{ \  \ }+\int_{0}^{\infty}\delta_{\lambda}^{1}\varphi(x)[\beta
_{2,k}(s^{-1}\lambda)-\beta_{2,k}^{\prime}(s^{-1}\lambda)s^{-1}\lambda
]\lambda^{-2}d\lambda \bigg \vert.
\end{align*}
For each $k\in \Lambda$, we set
\begin{align*}
&  \bigg \vert \int_{0}^{\infty}\delta_{\lambda}^{1}\varphi(x)[\alpha
\beta_{2,k}(s^{-1}\lambda)-\beta_{2,k}^{\prime}(s^{-1}\lambda)s^{-1}%
\lambda]\lambda^{-2}d\lambda \bigg \vert \\
&  =\bigg \vert \int_{1}^{\infty}\delta_{\lambda}^{1}\varphi(x)\big[\alpha
\beta_{2,k}(s^{-1}\lambda)-\beta_{2,k}^{\prime}(s^{-1}\lambda)s^{-1}%
\lambda \big]\lambda^{-2}d\lambda \bigg \vert \\
&  \text{\  \  \ }+\bigg \vert \int_{s}^{1}\delta_{\lambda}^{1}\varphi
(x)\big[\alpha \beta_{2,k}(s^{-1}\lambda)-\beta_{2,k}^{\prime}(s^{-1}%
\lambda)s^{-1}\lambda \big]\lambda^{-2}d\lambda \bigg \vert \\
&  \text{\  \  \ }+\bigg \vert \int_{0}^{s}\delta_{\lambda}^{1}\varphi
(x)\big[\alpha \beta_{2,k}(s^{-1}\lambda)-\beta_{2,k}^{\prime}(s^{-1}%
\lambda)s^{-1}\lambda \big]\lambda^{-2}d\lambda \bigg \vert \\
&  :=T_{1}+T_{2}+T_{3}\text{.}%
\end{align*}
Similar to the previous cases, using integration by parts and changing
variables, we can check that
\begin{align*}
T_{1}  &  =\bigg \vert(\varphi(x+1)-\varphi(x))\beta_{2,k}(s^{-1})+\int
_{1}^{\infty}D\varphi(x+\lambda)\beta_{2,k}(s^{-1}\lambda)\lambda^{-1}%
d\lambda \bigg \vert \\
&  \leq \left \Vert D\varphi \right \Vert _{\infty}|\beta_{2,k}(s^{-1}%
)|+\left \Vert D\varphi \right \Vert _{\infty}\int_{1}^{\infty}|\beta
_{2,k}(s^{-1}\lambda)|\lambda^{-1}d\lambda \text{,}%
\end{align*}%
\begin{align*}
T_{2}  &  =\bigg \vert \delta_{s}^{1}\varphi(x)\beta_{2,k}(1)s^{-1}-\delta
_{1}^{1}\varphi(x)\beta_{2,k}(s^{-1})\\
&  \text{ \  \ }+\int_{s}^{1}(D\varphi(x+\lambda)-D\varphi(x))\beta
_{2,k}(s^{-1}\lambda)\lambda^{-1}d\lambda \bigg \vert \\
&  \leq \left \Vert D^{2}\varphi \right \Vert _{\infty}(|\beta_{2,k}%
(1)|s+|\beta_{2,k}(s^{-1})|)+\left \Vert D^{2}\varphi \right \Vert _{\infty}%
\int_{0}^{1}|\beta_{2,k}(s^{-1}\lambda)|d\lambda \text{,}%
\end{align*}
and%
\begin{align*}
T_{3}  &  =\bigg \vert s^{-1}\int_{0}^{1}(\varphi(x+s\lambda)-\varphi
(x)-D\varphi(x)s\lambda)(\beta_{2,k}(\lambda)-\lambda \beta_{2,k}^{\prime
}(\lambda))\lambda^{-2}d\lambda \bigg \vert \\
&  \leq \frac{s}{2}\left \Vert D^{2}\varphi \right \Vert _{\infty}\int_{0}%
^{1}|\beta_{2,k}(\lambda)-\lambda \beta_{2,k}^{\prime}(\lambda)|d\lambda
\text{.}%
\end{align*}
This follows immediately that, for all $(s,x)\in(0,1]\times \mathbb{R}$
\begin{align*}
&  \frac{1}{s}\bigg \vert \mathbb{\hat{E}}\big[\varphi(x+sZ_{1})-\varphi
(x)\big]-s\sup \limits_{F_{\mu}\in \mathcal{L}}\int_{\mathbb{R}}\delta_{\lambda
}^{1}\varphi(x)F_{\mu}(d\lambda)\bigg \vert \\
&  \leq2\left \Vert D^{2}\varphi \right \Vert _{\infty}\sup_{k\in \Lambda}\int
_{0}^{1}|\beta_{2,k}(s^{-1}\lambda)|d\lambda+2\left \Vert D\varphi \right \Vert
_{\infty}\int_{1}^{\infty}|\beta_{2,k}(s^{-1}\lambda)|\lambda^{-1}d\lambda \\
&  \text{ \  \ }+s\left \Vert D^{2}\varphi \right \Vert _{\infty}\sup_{k\in
\Lambda}\int_{0}^{1}|\beta_{2,k}(\lambda)-\lambda \beta_{2,k}^{\prime}%
(\lambda)|d\lambda \\
&  \text{ \  \ }+2(\left \Vert D\varphi \right \Vert _{\infty}+\left \Vert
D^{2}\varphi \right \Vert _{\infty})\sup_{k\in \Lambda}|\beta_{2,k}%
(s^{-1})|+2s\left \Vert D^{2}\varphi \right \Vert _{\infty}\sup_{k\in \Lambda
}|\beta_{2,k}(1)|.
\end{align*}
Together with (E2) and the uniformly bounded (also denote as $C_{\beta}$) of
\[%
\begin{array}
[c]{lll}%
\displaystyle|\beta_{2,k}(1)| & \text{ and } & \displaystyle \int_{0}%
^{1}|\beta_{2,k}(\lambda)-\lambda \beta_{2,k}^{\prime}(\lambda)|d\lambda,
\end{array}
\]
we can conclude that
\begin{equation}
l_{\varphi}(s)=3C_{\beta}\left \Vert D^{2}\varphi \right \Vert _{\infty}s+\left(
4\left \Vert D^{2}\varphi \right \Vert _{\infty}+2(1+q_{0}^{-1})\left \Vert
D\varphi \right \Vert _{\infty}\right)  C_{\beta}s^{q_{0}}. \label{ex3}%
\end{equation}

\subsection{Proof of Theorem \ref{main theorem ex1 CLT}}

Based on the above discussion and Theorem \ref{main theorem}, we can derive
the following convergence rate for the discrete approximation scheme
(\ref{2.2}). Then, Theorem \ref{main theorem ex1 CLT} is a direct consequence of
this result.

\begin{theorem}
\label{main theorem ex1}Suppose that the assumptions hold. Then
\[
\left \vert u-u_{h}\right \vert \leq Ch^{\Gamma(\alpha,\delta,q_{0})}\text{ in
}[0,1]\times \mathbb{R}\text{,\  \ with }\Gamma(\alpha,\delta,q_{0})=\left \{
\begin{array}
[c]{ll}%
\min \left \{  \frac{2-\alpha}{2\alpha},\frac{q_{0}}{2}\right \}  , & \alpha
\in(1,2),\\
\min \left \{  \frac{\delta}{2},\frac{\delta^{2}}{1+\delta^{2}},\frac{q_{0}}%
{2}\right \}  , & \alpha=1,\\
\min \left \{  \frac{\delta}{2\alpha},\frac{\delta^{2}}{\alpha(\alpha+\delta
^{2})},\frac{1-\alpha}{\alpha},\alpha q_{0}\right \}  , & \alpha \in(0,1),
\end{array}
\right.
\]
where $C$ is a constant depending on $K$, $M_{\delta}$, $\left \Vert
\phi \right \Vert _{\infty}$, and $C_{\beta}$.
\end{theorem}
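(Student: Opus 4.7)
The plan is to specialize the abstract error bounds of Theorem \ref{main theorem} to the three $\alpha$-regimes, using the explicit consistency estimates (\ref{ex1}), (\ref{ex2}), (\ref{ex3}) computed earlier in Section \ref{Sec ex}, and then to optimize the resulting bound over the mollification parameter $\varepsilon \in (0,1)$ for each fixed $h$. Since Theorem \ref{main theorem ex1 CLT} follows immediately by taking $h=1/n$ (so that $u_h(1,0)=\mathbb{\hat{E}}[\phi(S_n/\sqrt[\alpha]{n})]$ and $u(1,0)=\mathbb{\tilde{E}}[\phi(\tilde{\zeta}_1)]$), it suffices to prove Theorem \ref{main theorem ex1}.

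First, I would rewrite the error bounds in the form
\[
|u-u_h| \leq K h^{\delta/\alpha} + 4K\varepsilon + \rho_1(\varepsilon,h) + \rho_2(\varepsilon,h),
\]
isolating the $\varepsilon$-dependence. The only case-dependent quantities are $l_{u^\varepsilon}(h)$ and $l_{u_h^\varepsilon}(h)$. Substituting (\ref{u_mollifer}) and (\ref{uh_mollifer}) into (\ref{ex1})--(\ref{ex3}) (so that $\|D\varphi\|_\infty$, $\|D^2\varphi\|_\infty$ are replaced by the appropriate powers of $\varepsilon$, and of $\varepsilon+h^{\delta/\alpha}$ in the lower bound) expresses $l_{u^\varepsilon}(h)$ and $l_{u_h^\varepsilon}(h)$ as explicit sums of monomials $\varepsilon^{a}h^{b}$. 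Together with the two explicit terms in $\rho_1,\rho_2$ (namely $\varepsilon^{1-2\alpha/\delta}h$ and $\varepsilon^{1-\alpha/\delta-\delta}h^{\delta/\alpha}$), this produces, in each of the three cases, a finite list of competing terms in $h$ and $\varepsilon$.

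Next, I would optimize $\varepsilon$ for each case by pairwise balancing the $4K\varepsilon$ term against each negative-power-in-$\varepsilon$ term, plus balancing the internal explicit $\rho_i$ terms against $\varepsilon$. For $\alpha\in(1,2)$ (with $\delta=1$), the key balance is $\varepsilon \asymp h^{(2-\alpha)/(2\alpha)}$ from pairing $\varepsilon$ with $\varepsilon^{-1}h^{(2-\alpha)/\alpha}$, competing with $\varepsilon \asymp h^{q_0/2}$ from the $q_0$-term; the other monomials can be shown to be dominated (using $\alpha>1$). For $\alpha=1$, the new competitors are $\varepsilon^{-1}h$ (yielding $\varepsilon\asymp h^{1/2}$, rate $h^{\delta/2}$ comes from the $Kh^{\delta/\alpha}$ term) and the mixed $\varepsilon^{1-2/\delta}h^{\delta}$ type terms from $\rho_1,\rho_2$ (yielding $\varepsilon\asymp h^{\delta/(1+\delta^2)}$, producing the $\delta^2/(1+\delta^2)$ rate). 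For $\alpha\in(0,1)$, the analogous balances plus the additional $h^{\alpha q_0}$ and $h^{(1-\alpha)/\alpha}$ contributions in (\ref{ex2}) produce the four-way minimum.

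The main obstacle is bookkeeping: for each case one must identify which pairwise balance is binding and verify that the other balances give at least as good a rate, so that the minimum over $\varepsilon$ really equals the claimed $h^{\Gamma(\alpha,\delta,q_0)}$. A technical subtlety appears in the lower-bound side, where the mollifier estimate gives derivative bounds in terms of $(\varepsilon+h^{\delta/\alpha})\varepsilon^{-l\alpha/\delta-k}$ rather than $\varepsilon^{1-l\alpha/\delta-k}$; one checks that at the optimal $\varepsilon$ one has $\varepsilon\geq h^{\delta/\alpha}$, so the extra $h^{\delta/\alpha}/\varepsilon$ factor is absorbed into a universal constant and the upper and lower bounds match at the same rate. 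Finally, the constants collect into $C$ depending on $K$, $M_\delta$, $\|\phi\|_\infty$, and $C_\beta$, completing the proof.
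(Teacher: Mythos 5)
Your proposal is correct and follows essentially the same route as the paper: specialize Theorem \ref{main theorem} with the explicit consistency errors (\ref{ex1})--(\ref{ex3}), set $\varepsilon=h^{\gamma}$, and maximize $\gamma$ subject to the pairwise balancing constraints, which yields exactly the stated $\Gamma(\alpha,\delta,q_{0})$ in each regime. Your observation that the $(\varepsilon+h^{\delta/\alpha})$ factor in (\ref{uh_mollifer}) is absorbed because the optimal $\varepsilon$ satisfies $\varepsilon\geq h^{\delta/\alpha}$ is the same (implicit) step the paper uses to make the lower bound match.
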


\begin{proof}
(i) For $\alpha \in(1,2)$. From Theorem \ref{main theorem} (i),
(\ref{u_mollifer}), and (\ref{ex1}), we know that%
\begin{align*}
u_{h}-u  &  \leq Kh^{1/\alpha}+4K\varepsilon+K\varepsilon^{1-2\alpha}%
h+8K^{2}M_{\delta}\varepsilon^{-\alpha}h^{1/\alpha}+l_{u^{\varepsilon}}(h)\\
&  \leq Kh^{1/\alpha}+4K\varepsilon+K\varepsilon^{1-2\alpha}h+8K^{2}M_{\delta
}\varepsilon^{-\alpha}h^{1/\alpha}\\
&  \text{ \  \ }+C_{\beta}K\left(  32+4\varepsilon^{-1}\right)  h^{q_{0}%
}+8C_{\beta}K\varepsilon^{-1}h^{\frac{2-\alpha}{\alpha}}.
\end{align*}
Let $\varepsilon=h^{\gamma}$ with $\gamma>0$. Then, we have%
\begin{align*}
u_{h}-u  &  \leq Kh^{1/\alpha}+4Kh^{\gamma}+Kh^{(1-2\alpha)\gamma+1}%
+8K^{2}M_{\delta}h^{-\alpha \gamma+1/\alpha}\\
&  \text{ \  \ }+36C_{\beta}Kh^{q_{0}-\gamma}+8C_{\beta}Kh^{\frac{2-\alpha
}{\alpha}-\gamma}\text{ \ in }[0,1]\times \mathbb{R}.
\end{align*}
In order to minimize the right-hand side, we need to maximize $\gamma$ under
the constraints $\gamma \leq(1-2\alpha)\gamma+1$, $\gamma \leq-\alpha
\gamma+1/\alpha$, $\gamma \leq q_{0}-\gamma$, $\gamma \leq \frac{2-\alpha}%
{\alpha}-\gamma$,\ and\ $\gamma \leq1/\alpha$. Hence, by choosing
\begin{equation}
\varepsilon=h^{\gamma}\text{\ with \ }\gamma:=\min \left \{  \frac{2-\alpha
}{2\alpha},\frac{q_{0}}{2}\right \}  , \label{Gama}%
\end{equation}
we can conclude that
\[
u_{h}-u\leq(6K+8K^{2}M_{\delta}+44C_{\beta}K)h^{\gamma}\text{ \  \ in
}[0,1]\times \mathbb{R}.
\]
In the same way, we can derive from Theorem \ref{main theorem} (ii) and
(\ref{ex1}) that%
\[
u-u_{h}\leq(8K+16K^{2}M_{\delta}+88C_{\beta}K)h^{\gamma}\text{ \  \ in
}[0,1]\times \mathbb{R}.
\]
(ii) For $\alpha \in(0,1)$. Combining with Theorem \ref{main theorem} (i),
(\ref{u_mollifer}), and (\ref{ex2}), we know that%
\begin{align*}
u_{h}-u  &  \leq Kh^{^{\frac{\delta}{\alpha}}}+4K\varepsilon+K\varepsilon
^{1-2\alpha/\delta}h+8K^{2}M_{\delta}\varepsilon^{1-\alpha/\delta-\delta
}h^{\delta/\alpha}\\
\text{ \  \ }  &  \text{\  \  \ }+16C_{\beta}Kh^{q_{0}}+8C_{\beta}Ks^{\frac
{1-\alpha}{\alpha}}+C_{\beta}(8\left \Vert \phi \right \Vert _{\infty}+\frac
{4K}{1-\alpha})h^{\alpha q_{0}}.
\end{align*}
Similarly, we can minimize the right-hand side by choosing%
\begin{equation}
\varepsilon=h^{\gamma}\text{\ with \ }\gamma:=\min \left \{  \frac{\delta
}{2\alpha},\frac{\delta^{2}}{\alpha(\alpha+\delta^{2})},\frac{1-\alpha}%
{\alpha},\alpha q_{0}\right \}  , \label{Gama2}%
\end{equation}
we conclude that
\[
u_{h}-u\leq \left(  6K+8K^{2}M_{\delta}+(24+\frac{4}{1-\alpha})C_{\beta
}K+8C_{\beta}\left \Vert \phi \right \Vert _{\infty}\right)  h^{\gamma}\text{
\  \ in }[0,1]\times \mathbb{R}.
\]
In the same way, we can derive from Theorem \ref{main theorem} (ii) and
(\ref{ex1}) that%
\[
u-u_{h}\leq \left(  8K+16K^{2}M_{\delta}+(48+\frac{8}{1-\alpha})C_{\beta
}K+8C_{\beta}\left \Vert \phi \right \Vert _{\infty}\right)  h^{\gamma}\text{
\  \ in }[0,1]\times \mathbb{R}.
\]
(iii) For $\alpha=1$. Together with Theorem \ref{main theorem} (i),
(\ref{u_mollifer}), and (\ref{ex3}), we have%
\begin{align*}
u_{h}-u  &  \leq Kh^{^{\delta}}+4K\varepsilon+K\varepsilon^{1-2/\delta
}h+8K^{2}M_{\delta}\varepsilon^{1-1/\delta-\delta}h^{\delta}\\
\text{ \ }  &  \text{\  \  \ }+6C_{\beta}K\varepsilon^{-1}h+C_{\beta
}K(8\varepsilon^{-1}+4+4q_{0}^{-1})h^{q_{0}}.
\end{align*}
In the same way above, by choosing
\begin{equation}
\varepsilon=h^{\gamma}\text{\ with \ }\gamma:=\min \left \{  \frac{\delta}%
{2},\frac{\delta^{2}}{1+\delta^{2}},\frac{q_{0}}{2}\right \}  ,
\end{equation}
we obtain that
\[
u_{h}-u\leq \left(  6K+8K^{2}M_{\delta}+(18+4q_{0}^{-1})C_{\beta}K\right)
h^{\gamma}\text{ \  \ in }[0,1]\times \mathbb{R}%
\]
and%
\[
u-u_{h}\leq \left(  8K+16K^{2}M_{\delta}+(36+8q_{0}^{-1})C_{\beta}K\right)
h^{\gamma}\text{ \  \ in }[0,1]\times \mathbb{R}\text{.}%
\]
We complete the proof.
\end{proof}

\end{document}